\documentclass[11pt, reqno]{article}
\usepackage{amsmath,amssymb,amsfonts,amsthm}
\usepackage{color}

\usepackage[colorlinks=true,linkcolor=blue,citecolor=blue,urlcolor=blue,pdfborder={0 0 0}]{hyperref}
\usepackage{cleveref}

\usepackage{caption}
\usepackage{thmtools}

\usepackage{mathrsfs}

\crefname{theorem}{Theorem}{Theorems}
\crefname{thm}{Theorem}{Theorems}
\crefname{lemma}{Lemma}{Lemmas}
\crefname{lem}{Lemma}{Lemmas}
\crefname{remark}{Remark}{Remarks}
\crefname{prop}{Proposition}{Propositions}
\crefname{defn}{Definition}{Definitions}
\crefname{corollary}{Corollary}{Corollaries}
\crefname{conjecture}{Conjecture}{Conjectures}
\crefname{question}{Question}{Questions}
\crefname{chapter}{Chapter}{Chapters}
\crefname{section}{Section}{Sections}
\crefname{figure}{Figure}{Figures}

\theoremstyle{plain}
\newtheorem{thm}{Theorem}[section]

\newtheorem{lemma}[thm]{Lemma}
\newtheorem{theorem}[thm]{Theorem}

\newtheorem{corollary}[thm]{Corollary}

\newtheorem{prop}[thm]{Proposition}

\newtheorem{question}[thm]{Question}
\theoremstyle{definition}

\theoremstyle{remark}
\newtheorem{remark}[thm]{Remark}

\numberwithin{equation}{section}

%% Special
\renewcommand{\P}{\mathbb P}
\newcommand{\E}{\mathbb E}

\newcommand{\Z}{\mathbb Z}

%%Mathcal
% \newcommand{\c}{\mathcal}

%% Mathscr

\newcommand{\sA}{\mathscr A}

\newcommand{\sH}{\mathscr H}

%% Mathbb

%% Mathfrak

%% USF Macros

%% Unimodular Macros

\usepackage[margin=0.9in]{geometry}
\usepackage{extarrows}
\usepackage{titling}
\usepackage{commath}
\usepackage{mathtools}
\usepackage{titlesec}
\newcommand{\eps}{\varepsilon}
\usepackage{bbm}
\usepackage{setspace}
\setstretch{1}
\usepackage{enumitem}

\usepackage[textsize=tiny]{todonotes}

\usepackage{cite}

\newcommand{\bP}{\mathbf P}

\newcommand{\lrDini}{\left(\frac{d}{dp}\right)_{\hspace{-0.2em}+}\!}

% \newcommand{\Peak}{\cP_{\!\mathrm{eak}}}

% Frac with custom spacing between num/denom and bar

% \newcommand{\myasymmetricfrac}[4][0pt]{\genfrac{}{}{}{}{\raisebox{#1}{$#3$}}{\raisebox{-#2}{$#4$}}}

%\def\Z{Z_t^{(1)}}

\def\P{\mathbb{P}}
%\def\E{\mathbb{E}}
%\def\eps{\varepsilon}

%\DeclareMathOperator*{\argmin}{arg\,min}

%%%Jonathan commands
% \newcommand{\set}[1]{\{#1\}}

% \newcommand{\pd}{\partial}

%\newcommand{\eps}{\epsilon}
%Probability
%\def\E{\mathop{\mathbb E}}

%\newcommand{ \mixrwG}{ t_{\mathrm{mix}}^{\mathrm{RW}(G,1)} 

%Probability

%\newcommand{\Var}{\mathrm{Var}}

\DeclareMathSymbol{\leqslant}{\mathalpha}{AMSa}{"36} % nicer `smaller or equal'
\DeclareMathSymbol{\geqslant}{\mathalpha}{AMSa}{"3E} % nicer `larger or equal'
\DeclareMathSymbol{\eset}{\mathalpha}{AMSb}{"3F}     % nicer `emptyset'
                   % redef. of < or =
                   % redef. of > or =
             % a straight d for differentials
       % \sum-like symbol for union

%\DeclareMathOperator*{\Var}{\mathrm Var}      % \sum-like symbol for inter

 % max with 2 lines
 % min with 2 lines
 % sup with 2 lines
 % inf with 2 lines
 % sum with 2 lines
 % sum with 3 lines
 % union with 2 lines
 % inter with 2 lines
     % \int with 2 lines
     % \lim with 2 lines
 % liminf 2 lines
 % limsup 2 lines
     % product 2 lines
 % prod. 3 lines

%%%%

\renewcommand{\epsilon}{\varepsilon}

%\newcommand{\RR}{{\sf I}\hspace{-.5mm}{\sf R}}

%sortcuts from the other cutoff via hitting times paper

%\newcommand{\En}{\ensuremath{\mathbb{E}(\zetan)}}
%\newcommand{\Varcn}{\ensuremath{\sigma^2(\zetacn)}}

%\newcommand{\znone}{\ensuremath{\zeta_n^1}}

%Algebraic Structures

% Notations

% \newcommand{\past}[1]{\text{past}\left(#1\right)}

\pretitle{\begin{flushleft}\Large}
\posttitle{\par\end{flushleft}\vskip 0.5em
}
\preauthor{\begin{flushleft}}
\postauthor{
\\
\vspace{0.5em}
\footnotesize{The Division of Physics, Mathematics and Astronomy, California Institute of Technology\\ Email: 
\href{mailto:t.hutchcroft@caltech.edu}{t.hutchcroft@caltech.edu}}
\end{flushleft}}
\predate{\begin{flushleft}}
\postdate{\par\end{flushleft}}
\title{\bf Transience and anchored isoperimetric dimension of supercritical percolation clusters
}

\renewenvironment{abstract}
 {\par\noindent\textbf{\abstractname.}\ \ignorespaces}
 {\par\medskip}

\titleformat{\section}
  {\normalfont\large \bf}{\thesection}{0.4em}{}

\author{{\bf Tom Hutchcroft}}
% \author{Perla Sousi}
\begin{document}

\date{\small{\today}}

\maketitle

\setstretch{1.1}

\begin{abstract}
 We establish several equivalent characterisations of the anchored isoperimetric dimension of supercritical clusters in  Bernoulli bond percolation on transitive graphs.
 We deduce from these characterisations together with a theorem of Duminil-Copin, Goswami, Raoufi, Severo, and Yadin (\emph{Duke Math.\ J.\ 2020}) that if $G$ is a transient transitive graph then the infinite clusters of Bernoulli percolation on $G$ are transient for $p$ sufficiently close to $1$. It remains open to extend this result down to the critical probability. Along the way we establish two new \emph{cluster repulsion inequalities} that are of independent interest.
\end{abstract}

% \newgeometry{margin=1.2in}

% \newpage 

% \tableofcontents

% \newgeometry{margin=1in}

% \newpage

\section{Introduction}\label{sec:intro}

Let $G=(V,E)$ be a connected, locally finite graph. In \textbf{Bernoulli bond percolation}, the edges of $G$ are each either deleted or retained independently at random, with retention probability $p\in [0,1]$, to obtain a random subgraph $G_p$ of $G$. Retained edges are referred to as \textbf{open} and deleted edges are referred to as \textbf{closed}, with the connected components of the open subgraph $G_p$ referred to as \textbf{clusters}. We will be primarily interested in the case that $G$ is \textbf{transitive}, meaning that for any two vertices $u$ and $v$ of $G$ there exists an automorphism mapping $u$ to $v$.  When $G$ is infinite, the \textbf{critical probability} $p_c=p_c(G)$ is defined by
\[
p_c=\inf\bigl\{p\in [0,1]: G_p \text{ has an infinite cluster almost surely}\bigr\},
\]
which typically satisfies $p_c<1$ once obvious `one-dimensional' counterexamples such as the line graph $\Z$ are excluded \cite{MR4181032,MR3520023,MR3865659,hutchcroft2021non}. 

Many of the most interesting questions concerning percolation in the supercritical phase $p_c<p <1$ can be phrased as follows: To what extent do the geometries of the infinite clusters of $G_p$ resemble the geometry of the ambient graph $G$? In particular, does the random walk on the infinite clusters of $G_p$ have similar behaviour to the random walk on $G$?
These questions are already interesting in the perturbative regime $1-p \ll 1$ where they are closely related to models in which the edges of $G$ are assigned independent random lengths \cite{MR1395617}. In the classical case of the hypercubic lattice $\Z^d$, a rich and detailed theory has been developed following the foundational works of Grimmett and Marstrand \cite{MR1068308} and Antal and Pisztora \cite{MR1404543}, with further significant works establishing the transience of the infinite cluster for $d\geq 3$ \cite{MR1222363}, the sharp isoperimetric inequalities and heat kernel estimates \cite{mathieu2004isoperimetry,MR2094438,Pete08,MR4417202}, and convergence of the random walk on the infinite cluster to Brownian motion \cite{MR2278453,MR2345229}. See also \cite{MR4417202,MR4288333,MR3077517,MR3144026,MR3000561,MR2433935,MR1896880} for related results on long-range models. A systematic study of similar questions beyond $\Z^d$ was initiated in the seminal work of Benjamini, Lyons, and Schramm \cite{BLS99}.
 Despite significant progress of many authors \cite{HermonHutchcroftSupercritical,contreras2021supercritical,benjamini2015disorder,MR2221157,chen2004anchored,v00}, a comprehensive theory of supercritical percolation on transitive graphs is still to be developed and several very basic problems remain open.

In this paper we are interested primarily in the \emph{isoperimetry} of the infinite clusters, that is, the boundary/volume ratios of finite subsets of the cluster. Understanding the isoperimetry of a graph is closely entwined with understanding the behaviour of random walk on that graph, see e.g.\ \cite{KumagaiBook,LP:book} for background.
Let $G$ be a connected, locally finite graph and let $d>1$. We say that $G$ satisfies a \textbf{uniform\footnote{An inequality of this form is usually referred to simply as an isoperimetric inequality. We prepend the word `uniform' to distinguish from the other kinds of isoperimetric inequalities we consider.} $d$-dimensional isoperimetric inequality} if there exists a positive constant $c$ such that
\[
|\partial_E W| \geq c\cdot \Bigl(\sum_{w\in W} \deg(w)\Bigr)^{(d-1)/d}
\]
for every finite set of vertices $W \subseteq V$, where $\partial_E W$ denotes the set of edges with one endpoint in $W$ and the other in $V\setminus W$. The \textbf{uniform isoperimetric dimension} of $G$ is defined to be the supremal value of $d$ for which $G$ satisfies a uniform $d$-dimensional isoperimetric inequality.
It is a consequence of a theorem of Coulhon and Saloff-Coste \cite{MR1232845} that a transitive graph satisfies a uniform $d$-dimensional isoperimetric inequality if and only if it has at least $d$-dimensional volume growth in the sense that its balls $B(v,r)$ satisfy $|B(v,r)|\geq cr^d$ for some $c>0$. Moreover, the classification of transitive graphs of polynomial volume growth due to Gromov \cite{Gromov87} and Trofimov \cite{MR811571} implies that every transitive graph satisfying $|B(v,r)| \leq C r^C$ for some $C<\infty$ and infinitely many $r$ must satisfy $|B(v,r)| \asymp r^d$ for some \emph{integer} $d \geq 0$, where we write $\asymp$ for an equality holding to within two positive constants. Thus, every transitive graph of polynomial growth has a well-defined integer dimension $d$ that describes both its volume growth and uniform isoperimetric dimension, while transitive graphs of superpolynomial growth have infinite uniform isoperimetric dimension.

Since for $p_c<p<1$ the infinite clusters of $G_p$ always contains arbitrarily large `bad zones' whose induced subgraphs are isomorphic to paths, they cannot satisfy any non-trivial uniform isoperimetric inequality. These considerations led Benjamini, Lyons, and Schramm \cite{BLS99} to introduce the notion of \emph{anchored isoperimetric inequalities}, which are similar in spirit to uniform isoperimetric inequalities but weak enough to potentially hold in non-trivial random examples.
 We say that a graph $G=(V,E)$ satisfies an \textbf{anchored $d$-dimensional isoperimetric inequality} if for some (and hence every) vertex $v$ of $G$ there exists a positive constant $c(v)$ such that
\[
|\partial_E W| \geq c(v)\cdot \Bigl(\sum_{w\in W} \deg(w)\Bigr)^{(d-1)/d}
\]
for every finite connected set of vertices $W \subseteq V$ that contains $v$. 
% we denote the optimal value of this constant $c(v)$ as $\Phi_d^*(G,v)$.
 As before, the \textbf{anchored isoperimetric dimension} of $G$ is defined to be the supremal value of $d$ for which $G$ satisfies an anchored $d$-dimensional isoperimetric inequality. It follows from a theorem of Thomassen \cite{Thomassen92} that graphs of anchored isoperimetric dimension strictly larger than $2$ are transient for simple random walk.

 Our main theorem establishes four equivalent characterisations of the anchored isoperimetric dimension of supercritical percolation clusters on a transitive graph. Before stating this theorem let us briefly introduce some relevant notation. Given a set $S \subseteq V$ and a vertex $v\notin S$, we write $\{S \leftrightarrow \infty\}$ for the event that $S$ is connected to infinity by an open path, write $\{S \nleftrightarrow \infty\}$ for the complement of this event, and write $\{ v \leftrightarrow \infty$ off $S\}$ for the event that $v$ is connected to infinity by an open path that does not visit any vertex of $S$. We write $\partial_E^\rightarrow S$ for the set of oriented edges $e$ with $e^-\in S$ and $e^+ \notin S$; although our graphs are unoriented it is convenient to think of each edge as corresponding to a pair of oriented edges. We also fix an arbitrary `origin' vertex $o$ of $V$ and write $K$ for the cluster of this vertex in $G_p$.
% \[
% \Phi_d(G,v)=\inf\left\{\frac{|\partial_E W|}{|W|} : W \subseteq V \text{ connected, finite, and contains $v$}\right\}.
% \]

\begin{theorem}[Characterisation of the isoperimetric dimension of percolation clusters]
\label{thm:main_equivalence}
Let $G=(V,E)$ be a connected, locally finite, transitive graph, let $p_c\leq p_0< 1$ and let $d \in (1,\infty]$. The following are equivalent:
\begin{enumerate}[label=\emph{(\roman*)}]
\item For each $p_0<p\leq 1$ and $d'<d$ there exists a positive constant $c=c(p,d')$ such that
\begin{equation}
\label{eq:Sbound}
\P_p (S\nleftrightarrow \infty) \leq \exp\left[ -c|S|^{(d'-1)/d'}\right]
\end{equation}
for every finite, non-empty set $S\subseteq V$.
\item For each $p_0<p\leq 1$ and $d'<d$ there exists a positive constant $c=c(p,d')$ such that
\begin{equation}
\label{eq:Sboundary}
\sum_{e \in \partial_E^\rightarrow S} \P_p\left(e^+ \leftrightarrow \infty \text{ \emph{off} $S$}\right) \geq c|S|^{(d'-1)/d'}
\end{equation}
for every finite, non-empty set $S\subseteq V$.
\item For each $p_0<p\leq 1$ and $d'<d$ there exists a positive constant $c=c(p,d')$ such that
\begin{equation}
\label{eq:volume_tail}
\P_p(n\leq |K|<\infty) \leq \exp\left[ -cn^{(d'-1)/d'}\right]
\end{equation}
for every $v\in V$ and $n\geq 1$.
\item For each $p_0<p\leq 1$, every infinite cluster of $G_p$ has anchored isoperimetric dimension at least $d$ almost surely.
% \item For each $p_0<p\leq 1$, the conditional law of the unimodular random rooted graph $(K_o,o)$ given $|K_o|=\infty$ has unimodular isoperimetric dimension at least $d_0$.
% \item For each $p_0<p\leq 1$ there exists a random non-empty subgraph $H_p$ of $G_p$  such that every component of $H_p$ has isoperimetric dimension at least $d_0$ almost surely and the joint law of the pair $(G_p,H_p)$ is automorphism-invariant.
\end{enumerate}
The equivalence \emph{(i)} $\Leftrightarrow$ \emph{(ii)} and the implications \emph{(ii)}  $\Rightarrow$  \emph{(iii)} $\Rightarrow$ \emph{(iv)} do not require transitivity and hold for any bounded degree graph.
\end{theorem}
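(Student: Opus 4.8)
The plan is to route everything through the single random variable
\[
N_S \;:=\; \#\bigl\{\,e\in\partial_E^\rightarrow S \;:\; e^+\leftrightarrow\infty\text{ off }S\,\bigr\}.
\]
The event $\{e^+\leftrightarrow\infty\text{ off }S\}$ depends only on the edges of $G$ with both endpoints outside $S$, and these are independent of the states of the edges of $\partial_E^\rightarrow S$. So one may first reveal the configuration on $V\setminus S$ — which determines $N_S$ and which boundary edges are the ``good'' ones leading to infinity off $S$ — and then reveal the edges of $\partial_E^\rightarrow S$; since $\{S\leftrightarrow\infty\}$ holds if and only if at least one good boundary edge is open, this yields the identities
\[
\P_p(S\nleftrightarrow\infty)=\E_p\!\left[(1-p)^{N_S}\right],\qquad
\E_p[N_S]=\sum_{e\in\partial_E^\rightarrow S}\P_p\!\left(e^+\leftrightarrow\infty\text{ off }S\right).
\]
Thus \emph{(i)} asserts exactly that $\E_p[(1-p)^{N_S}]\le\exp[-c\,|S|^{(d'-1)/d'}]$ and \emph{(ii)} that $\E_p[N_S]\ge c\,|S|^{(d'-1)/d'}$, for all admissible $p,d'$ and all finite non-empty $S$. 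Then \emph{(i)}$\Rightarrow$\emph{(ii)} is just Jensen's inequality applied to the convex function $x\mapsto(1-p)^x$. For \emph{(ii)}$\Rightarrow$\emph{(i)} I must upgrade the first-moment bound on $N_S$ to a high-probability bound, i.e.\ rule out that $N_S$ is typically far below its mean; I would do this in two steps. First a \emph{cluster repulsion} (second-moment) estimate: using a van den Berg--Kesten type inequality to compare $\P_p(e^+\leftrightarrow\infty\text{ off }S,\;f^+\leftrightarrow\infty\text{ off }S)$ with $\P_p(e^+\leftrightarrow\infty\text{ off }S)\,\P_p(f^+\leftrightarrow\infty\text{ off }S)$ — the discrepancy being governed by the event that $e^+$ and $f^+$ lie in a common cluster of $G\setminus S$ — to bound $\E_p[N_S^2]$ by $\E_p[N_S]^2$ up to lower order and conclude, via Paley--Zygmund, that $\P_p(S\leftrightarrow\infty)\ge\delta_0$ for a constant $\delta_0>0$ uniform over all finite non-empty $S$. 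Then a sprinkling/renormalisation step: writing $G_p=G_q\vee\omega'$ with $q\in(p_0,p)$ and exploiting the independence the sprinkling field $\omega'$ creates between escape attempts through well-separated portions of $\partial_E^\rightarrow S$, amplify the uniform lower bound $\delta_0$ into the stretched-exponential decay required by \eqref{eq:Sbound}.

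For \emph{(ii)}$\Rightarrow$\emph{(iii)} I would reveal the cluster $K^q$ of the origin at a slightly smaller parameter $q\in(p_0,p)$; on the event $\{|K|<\infty\}$ this cluster is finite, say $K^q=A$. Applying the identity above to the sprinkling $G_p=G_q\vee\omega'$, with $s=(p-q)/(1-q)$, gives $\P_p(|K|<\infty\mid K^q=A)\le\E_q[(1-s)^{N_A}]$, and the concentration of $N_A$ established in the proof of \emph{(ii)}$\Rightarrow$\emph{(i)} (used at level $q$) bounds this by $\exp[-c\,|A|^{(d'-1)/d'}]$. To deal with the remaining possibility that $K^q$ is small although the cluster eventually grows past $n$, I would run the sprinkling through a geometric sequence of parameters in $(q,p)$, at each stage invoking \eqref{eq:Sboundary} to lower-bound the expected number of vertices entering the current cluster across its boundary; this gives \eqref{eq:volume_tail}.

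For \emph{(iii)}$\Rightarrow$\emph{(iv)} I would run a Peierls argument with \eqref{eq:volume_tail} as input. Fix $d'<d$ and pick $d''\in(d',d)$. If a finite connected $W\ni v$ inside an infinite cluster has $|W|=m$ and within-cluster edge boundary $D:=\partial_E^{\mathcal C_v}W$ with $|D|<m^{(d'-1)/d'}$, then closing the fewer than $m^{(d'-1)/d'}$ edges of $D$ makes $W$ the entire cluster of $v$. Summing over such $W$ and over the choice of $D$, and using that for $D\subseteq\partial_E W$ one has $\P_p(\{D\text{ open}\}\cap\{\text{cluster of }v\text{ after closing }D\text{ equals }W\})=(p/(1-p))^{|D|}\,\P_p(K_v=W)$, one obtains
\[
\P_p\!\left(\exists\text{ such }W\text{ with }|W|=m\right)\;\le\;\Bigl(\sum_{j< m^{(d'-1)/d'}}\binom{\Delta m}{j}\bigl(\tfrac{p}{1-p}\bigr)^{\!j}\Bigr)\cdot\P_p\!\left(m\le|K_v|<\infty\right),
\]
with $\Delta$ the maximum degree. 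The combinatorial prefactor is $\exp[O(m^{(d'-1)/d'}\log m)]$ while \eqref{eq:volume_tail} bounds the last factor by $\exp[-c\,m^{(d''-1)/d''}]$; since $(d''-1)/d''>(d'-1)/d'$ the product is summable in $m$, so Borel--Cantelli gives, almost surely, a random scale beyond which no such $W$ exists, i.e.\ an anchored $d'$-dimensional isoperimetric inequality for $\mathcal C_v$ with a positive random constant. Intersecting over the countably many vertices $v$ and over a dense sequence of $d'<d$ gives \emph{(iv)}. Only boundedness of the degree has been used so far.

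For the remaining implication \emph{(iv)}$\Rightarrow$\emph{(i)} — where transitivity enters — I would argue by contradiction: if \eqref{eq:Sbound} fails then there are finite sets $S_k$ with $|S_k|\to\infty$ for which $\P_p(S_k\nleftrightarrow\infty)$ is not stretched-exponentially small, and, translating copies of $S_k$ by automorphisms and using a mass-transport argument together with a second cluster repulsion inequality (bounding how strongly the finite clusters meeting $S_k$ interact with the surrounding infinite cluster), one should be able to produce, with non-negligible probability, a connected subset of the infinite cluster whose edge boundary is much smaller than the $(d'-1)/d'$ power of its size — contradicting the anchored isoperimetric inequality supplied by \emph{(iv)} and yielding the required decay. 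I expect the two hard parts of the whole argument to be precisely the places where geometric or first-moment information must be converted into a with-high-probability statement, namely \emph{(ii)}$\Rightarrow$\emph{(i)} and \emph{(iv)}$\Rightarrow$\emph{(i)}: both rest on the cluster repulsion inequalities, and the delicate point in each is to make the repulsion/second-moment comparison hold uniformly over \emph{all} finite sets $S$, including very irregularly shaped ones, since this uniformity is what produces the sharp exponent rather than a weaker one.
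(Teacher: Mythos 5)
Your reduction of (i) and (ii) to the single variable $N_S$ is correct, and your Jensen-inequality proof of (i)$\Rightarrow$(ii) (from the identity $\P_p(S\nleftrightarrow\infty)=\E_p[(1-p)^{N_S}]$) is a clean alternative to the paper's route through the Aizenman--Chayes--Chayes--Fr\"ohlich--Russo sprinkling inequality and Menger's theorem. Your Peierls argument for (iii)$\Rightarrow$(iv) is essentially the paper's (following Pete), with a slightly larger exponent $d''$ absorbing the logarithmic combinatorial cost instead of an explicit $\log$ correction. The remaining three implications, however, all contain genuine gaps, and they stem from the same misplaced strategy: trying to upgrade the first-moment bound on $N_S$ to a with-high-probability bound.

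For (ii)$\Rightarrow$(i), neither of your two steps is carried out and neither is plausible as stated. The events $\{e^+\leftrightarrow\infty \text{ off } S\}$ are increasing, hence positively correlated, so a BK-type comparison gives no control of $\E_p[N_S^2]$ by $\E_p[N_S]^2$: two boundary points escaping in the \emph{same} infinite cluster of $G\setminus S$ is the typical scenario, not an exceptional one, and $N_S$ need not concentrate. Even granting a uniform lower bound $\P_p(S\leftrightarrow\infty)\ge\delta_0$, the amplification to stretched-exponential decay "through well-separated portions of $\partial_E^\rightarrow S$" for an arbitrary, possibly highly irregular finite set $S$ is the entire content of the implication and is left as a black box. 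The paper avoids concentration altogether: by Russo's formula in Dini-derivative form, on $\{S\nleftrightarrow\infty\}$ the closed pivotals for $\{S\leftrightarrow\infty\}$ are the boundary edges of the finite union $K_S\supseteq S$ of clusters meeting $S$, so hypothesis (ii) applied to the random set $K_S$ gives $\lrDini \log \P_p(S\nleftrightarrow\infty)^{-1}\ge \frac{c}{1-p}|S|^{(d'-1)/d'}$, which integrates to \eqref{eq:Sbound}. Your (ii)$\Rightarrow$(iii) inherits the same gap, since it explicitly invokes "the concentration of $N_A$ established in the proof of (ii)$\Rightarrow$(i)". The paper's mechanism there is again first-moment only: the sprinkled cluster repulsion inequality $\P(|K_{v,p_2}|<\infty,\ \tau(K_{v,p_2},K_{\infty,p_1})\ge n)\le ((1-p_2)/(1-p_1))^n$ (all $\tau$ edges must remain closed after sprinkling), played off against a reverse Markov inequality showing that, conditionally on $|E(K_{v,p_2})|=n$, one has $\tau\ge \frac{c}{2}\phi(n)$ with probability at least $c\phi(n)/2n$; comparing the polynomially small lower bound with the exponentially small upper bound forces $\P(|E(K_v)|=n)$ to be exponentially small. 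Finally, your (iv)$\Rightarrow$(i) is only a declaration of intent. The paper proves (iv)$\Rightarrow$(ii) directly: transitivity yields $\eps>0$ with $\P_{p}(\Phi^*_{d'}(K_v,v)\ge\eps)\ge\eps$ for every $v$, hence a positive density of such "good" vertices in $S$ with probability at least $\eps/2$, and a deterministic hull argument converts the anchored isoperimetry of the components of the hull of $S\cap K_\infty$ into the lower bound \eqref{eq:Sboundary}. As written, your proposal establishes only (i)$\Rightarrow$(ii) and (iii)$\Rightarrow$(iv).
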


\begin{remark}
The freedom to reduce the dimension by an arbitrarily small amount is needed in the proof of (iii) $\Rightarrow$ (iv) but not in any other step of the proof. Quantitatively, the proof of (iii) $\Rightarrow$ (iv) yields that if the bound \eqref{eq:volume_tail} holds for some specific $p_c<p<1$ and $d'>1$ then every infinite cluster of $G_p$ satisfies an anchored $(\log t)^{-1} t^{(d'-1)/d'}$-isoperimetric inequality almost surely, see \cref{prop:union_bound_isoperimetry}. 
\end{remark}

\begin{remark}
The proofs of the individual propositions making up the various implications of \cref{thm:main_equivalence} all allow one to consider isoperimetric inequalities defined in terms of a general function $\phi$, which we take to be $\phi(t)=t^{(d-1)/d}$ when applying these propositions to prove \cref{thm:main_equivalence}. 
\end{remark}

In the classical case of $\Z^d$, the tail bound \eqref{eq:volume_tail} was proven to hold with $d'=d$ for every $p_c<p<1$ by Kesten and Zhang~\cite{MR1055419} conditional on the then-conjectural Grimmett-Marstrand theorem \cite{MR1068308} (see \cite{MR594824} for a matching lower bound), and the infinite cluster was proven to satisfy an anchored $d$-dimensional isoperimetric inequality by Pete \cite{Pete08} (see also \cite{mathieu2004isoperimetry}); \cref{thm:main_equivalence} shows that these two results are equivalent up to changing the relevant exponents by an arbitrarily small amount. Beyond the Euclidean setting, Contreras, Martineau, and Tassion \cite{contreras2021supercritical} have recently shown that if $G$ is a $d$-dimensional transitive graph then \eqref{eq:volume_tail} holds with $d'=d$ for every $p_c<p<1$. Thus, \cref{thm:main_equivalence} allows us to conclude in this setting that the infinite clusters have anchored isoperimetric dimension $d$ almost surely for every $p_c<p<1$. This result is new outside the Euclidean case.
 % Supercritical percolation on $\Z^d$ was proven to satisfy an anchored $d$-dimensional isoperimetric inequality in \cite{Pete08}

\begin{corollary}
\label{cor:polygrowth}
Let $G=(V,E)$ be a connected, locally finite, transitive graph. If $G$ has polynomial volume growth of dimension $d$ then the infinite cluster of $G_p$ has anchored isoperimetric dimension $d$ almost surely for every $p_c<p\leq 1$.
\end{corollary}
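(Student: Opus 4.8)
The plan is to bound the anchored isoperimetric dimension of an infinite cluster $K_\infty$ of $G_p$ from below and from above separately, arriving at the value $d$ in both cases. We may assume $p_c<1$, since the range $p_c<p\leq 1$ is otherwise empty and there is nothing to prove; as a transitive graph of volume growth dimension at most $1$ is finite or two-ended and hence has $p_c=1$, this forces $d\geq 2$, so that \cref{thm:main_equivalence} applies with $p_0=p_c$.

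For the lower bound I would invoke the theorem of Contreras, Martineau, and Tassion \cite{contreras2021supercritical} recalled above: since $G$ is a $d$-dimensional transitive graph, the volume tail estimate \eqref{eq:volume_tail} holds with $d'=d$ for every $p_c<p<1$, and it holds trivially at $p=1$ because $\P_1(n\leq|K|<\infty)=0$. Thus item \emph{(iii)} of \cref{thm:main_equivalence} holds with $p_0=p_c$, and the implication \emph{(iii)} $\Rightarrow$ \emph{(iv)} of that theorem gives that for every $p_c<p\leq 1$ every infinite cluster of $G_p$ has anchored isoperimetric dimension at least $d$ almost surely.

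For the upper bound I would use that $K_\infty$ is a subgraph of $G$, so that the graph-distance balls $B_{K_\infty}(v,r)$ inside $K_\infty$ satisfy $|B_{K_\infty}(v,r)|\leq|B_G(v,r)|\leq Cr^d$ for every $v\in K_\infty$ and $r\geq 1$, with $C$ coming from the polynomial growth of $G$. If $K_\infty$ satisfied an anchored $d'$-dimensional isoperimetric inequality for some $d'>d$, then feeding the connected sets $W=B_{K_\infty}(v,r)$ into that inequality and bounding $|\partial_E W|$ from above by the maximum degree times $|B_{K_\infty}(v,r+1)|-|B_{K_\infty}(v,r)|$ would produce a recursion of the form $f(r+1)-f(r)\gtrsim f(r)^{(d'-1)/d'}$ for $f(r):=|B_{K_\infty}(v,r)|$; a routine comparison with the solution of $g'=\mathrm{const}\cdot g^{(d'-1)/d'}$ then forces $f(r)\gtrsim r^{d'}$ for all large $r$, contradicting $f(r)\leq Cr^d$. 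Hence $K_\infty$ has anchored isoperimetric dimension at most $d$, and combined with the lower bound it equals $d$ almost surely for every $p_c<p\leq 1$.

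There is no essential obstacle here: the corollary is really the packaging of \cref{thm:main_equivalence} with the external input of \cite{contreras2021supercritical} that is foreshadowed in the paragraph preceding it, and the one point needing its own (entirely standard) argument is the ``anchored isoperimetry $\Rightarrow$ volume growth'' comparison used to obtain the matching upper bound.
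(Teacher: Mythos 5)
Your proof is correct and follows essentially the same route as the paper: the lower bound is exactly the intended argument (the Contreras--Martineau--Tassion volume tail estimate gives item (iii) of \cref{thm:main_equivalence} with $p_0=p_c$, and (iii) $\Rightarrow$ (iv) yields anchored dimension at least $d$), which is all the paper says explicitly in the paragraph preceding the corollary. Your matching upper bound via the standard ``anchored isoperimetry forces volume growth'' differential inequality, together with $|B_{K_\infty}(v,r)|\leq |B_G(v,r)|\leq Cr^d$, is the routine step the paper leaves implicit, and you carry it out correctly.
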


As mentioned above, supercritical percolation remains very poorly understood on general transitive graphs: while we expect that infinite supercritical percolation clusters on a transitive graph of superpolynomial growth should have infinite isoperimetric dimension almost surely, our current understanding is not even sufficient to prove that this dimension is strictly greater than $1$! 

The one case where the problem is well-understood is the \emph{nonamenable} case, in which $G$ satisfies an isoperimetric inequality of the form $|\partial_E W| \geq c|W|$, where we established in joint work with Hermon \cite{HermonHutchcroftSupercritical} that for each $p_c<p<1$ there exists $c_p>0$ such that $\P_p(n \leq |K| < \infty) \leq e^{-c_p n}$ and hence that
infinite clusters have \emph{anchored expansion} almost surely for every $p_c<p<1$. A key step in this proof \cite[Proposition 2.4]{HermonHutchcroftSupercritical} was to establish an inequality of the form
\[
\sum_{e \in \partial_E^\rightarrow S} \P_p\left(e^+ \leftrightarrow \infty \text{ off $S$}\right) \geq c|S|,
\]
the analogue of \eqref{eq:Sboundary} in the nonamenable case.
The methods of this paper yield a much simpler method for concluding the rest of the proof given this estimate, see \cref{sec:repulsion}.

Although we currently lack tools to analyze the whole supercritical regime $p_c<p<1$ on amenable transitive graphs of superpolynomial growth, we are able to apply \cref{thm:main_equivalence} in conjunction with the results of Duminil-Copin, Goswami, Raoufi, Severo, and Yadin \cite{MR4181032} to prove that the infinite clusters have infinite isoperimetric dimension for $p$ sufficiently close to 1. Before stating their result, we recall that the \textbf{capacity} $\operatorname{Cap}(S)$ of a finite set of vertices in a graph $G$ is defined to be $\operatorname{Cap}(S)=\sum_{v\in S} \deg(v)\bP_v(\tau^+_S = \infty)$, where $\bP_v(\tau^+_S = \infty)$ denotes the probability that a simple random walk started at $v$ never returns to $S$ after time zero. It is a consequence of a theorem of Lyons, Morris, and Schramm \cite[Theorem 6.1]{LMS08} (see also \cite[Theorem 6.41]{LP:book}) that if $G$ satisfies a uniform $d$-dimensional isoperimetric inequality for some $d>2$ then there exists a positive constant $c$ such that $\operatorname{Cap}(S)\geq c|S|^{(d-2)/d}$ for every finite set of vertices $S$.

\begin{thm}[DGRSY] Let $G=(V,E)$ be a bounded degree graph satisfying a uniform $d$-dimensional isoperimetric inequality for some $d>4$. Then there exist $p_0<1$ and $c>0$ such that
\begin{equation}
\label{eq:DGRSY}
\P_p(S \nleftrightarrow \infty) \leq \exp\left[ -\frac{1}{2} \operatorname{Cap}(S)\right] \leq \exp\left[ -c |S|^{(d-2)/d}\right] 
\end{equation}
for every $p_0\leq p \leq 1$ and every finite set $S \subseteq V$.
\end{thm}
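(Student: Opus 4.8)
Since this is the main result of \cite{MR4181032} and we use it only as a black box, I will just describe the shape of the argument. The second inequality in \eqref{eq:DGRSY} is immediate: the Lyons--Morris--Schramm estimate quoted above shows that a bounded degree graph with a uniform $d$-dimensional isoperimetric inequality and $d>2$ satisfies $\operatorname{Cap}(S)\ge c|S|^{(d-2)/d}$ for every finite $S$, and in particular (taking $|S|=1$) that $G$ is transient, so that the Gaussian free field on its cable graph is well defined. The content of the theorem is therefore the first inequality $\P_p(S\nleftrightarrow\infty)\le\exp[-\tfrac12\operatorname{Cap}(S)]$ for $p$ close to $1$, and it is for this that the stronger hypothesis $d>4$ is needed; transience alone does not suffice, since there exist transient graphs with $p_c=1$.

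The plan is to compare Bernoulli percolation with the Gaussian free field. Let $\widetilde G$ be the cable (metric) graph obtained from $G$ by replacing each edge with a unit segment, and let $\widetilde\phi$ be the Gaussian free field on $\widetilde G$. By Lupu's isomorphism theorem, the sign clusters of $\widetilde\phi$ have the law of the clusters of the random walk loop soup on $\widetilde G$ at intensity $\tfrac12$, which yields exact identities relating connectivity inside the level sets $\{\widetilde\phi\ge h\}$ to the Green function of $G$. A multiscale renormalisation argument --- in which the isoperimetric inequality provides enough room to build a hierarchy of ``good boxes'' where the level set crosses and connects up --- then shows that for $1-p$ small enough the configuration $G_p$ stochastically dominates the level-set percolation $\{\widetilde\phi\ge h\}$ traced on $G$, for a suitably negative level $h=h(p)$. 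Consequently, in the coupling, $\{S\nleftrightarrow\infty\}$ in $G_p$ forces $S$ to be separated from infinity inside $\{\widetilde\phi\ge h\}$.

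It then remains to bound the probability of this Gaussian disconnection event by $\exp[-\tfrac12\operatorname{Cap}(S)]$, and the mechanism producing exactly the constant $\tfrac12$ is a Cameron--Martin computation. If a vertex set $T$ separates $S$ from infinity then its equilibrium potential $h_T$ (harmonic off $T$, equal to $1$ on $T$, vanishing at infinity) satisfies $\cE(h_T)=\operatorname{Cap}(T)\ge\operatorname{Cap}(S)$, and the linear functional $L_T(\widetilde\phi)=\sum_w \mu_T(w)\widetilde\phi_w$ built from the equilibrium measure $\mu_T$ of $T$ is a centred Gaussian of variance $\operatorname{Cap}(T)$; on the event that $\widetilde\phi<h$ on all of $T$ one has $L_T(\widetilde\phi)<h\operatorname{Cap}(T)$, so by the Gaussian tail bound that event has probability at most $\exp[-\tfrac12 h^2\operatorname{Cap}(T)]\le\exp[-\tfrac12\operatorname{Cap}(S)]$ provided $|h|\ge 1$. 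Turning this into a bound on the disconnection event requires controlling the sum over the possible separating surfaces $T$: a naive union bound is far too lossy, and the fix is to route the whole argument through decoupling inequalities for the Gaussian free field inside the multiscale scheme of the previous step.

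This decoupling step is the main obstacle, and it is where $d>4$ rather than merely $d>2$ is essential: the decorrelation estimates for the Gaussian free field underpinning both the renormalisation and the union bound require the squared Green function to be summable, $\sum_{x\in V}G(o,x)^2<\infty$, and a uniform $d$-dimensional isoperimetric inequality forces $G(o,x)$ to decay quickly enough for this precisely when $d>4$ (on $\Z^d$ one has $G(o,x)\asymp|x|^{2-d}$, and $\sum_x|x|^{2(2-d)}<\infty$ if and only if $d>4$). For the complete argument I refer to \cite{MR4181032}.
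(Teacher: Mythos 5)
You are right that this theorem is used purely as a black box: the paper contains no proof of it and simply imports it from \cite{MR4181032}, so deferring to that reference is exactly what the paper does. Your derivation of the second inequality from the Lyons--Morris--Schramm capacity bound is correct, and so is your identification of where $d>4$ enters: the isoperimetric inequality gives $p_t(x,x)\lesssim t^{-d/2}$, whence $\sum_{x}G(o,x)^2\asymp\sum_{t\ge 0}(t+1)p_t(o,o)<\infty$ precisely when $d>4$, and this summability is the hypothesis under which the argument of \cite{MR4181032} runs.

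However, the two mechanisms you propose for the first inequality are not those of \cite{MR4181032}, and one of them is false as stated. Bernoulli-$p$ percolation with $p<1$ cannot stochastically dominate the level set $\{\widetilde\phi\ge h\}$ for any fixed level $h$: already on $\Z^d$, $d\ge 3$, entropic repulsion gives $\P\bigl[\widetilde\phi\ge h \text{ on all of } B_n\bigr]=\exp\bigl(-O(n^{d-2}\log n)\bigr)$, which for large $n$ vastly exceeds $p^{|B_n|}$, and containing $B_n$ is an increasing event; so no such coupling exists, and the transfer from the free field to percolation must go another way. In \cite{MR4181032} it goes through Lupu's loop-soup representation of the sign clusters, decomposing the soup into per-edge/per-vertex pieces that can be compared with Bernoulli percolation and ``large'' loops whose local sparsity is exactly what $\sum_x G(o,x)^2<\infty$ controls. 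Likewise, there is no multiscale renormalisation and no decoupling inequality anywhere in that proof --- on a general bounded-degree graph with only an isoperimetric hypothesis there is no box hierarchy to renormalise over, and avoiding such machinery is the point of the method. The capacity bound with the exact constant $\tfrac12$ is obtained at level $0$, not at a negative level, by a soft argument: one explores the sign cluster $\mathcal{A}\supseteq S$ of the cable-graph field, notes that on the disconnection event $\mathcal{A}$ is compact and (by continuity of the field on cables) the conditional law of the field off $\mathcal{A}$ is a zero-boundary GFF, and performs a Cameron--Martin shift by the equilibrium potential of $\mathcal{A}$, whose Dirichlet energy is $\operatorname{Cap}(\mathcal{A})\ge\operatorname{Cap}(S)$; no union bound over separating surfaces is ever taken. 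Since the theorem is only quoted, none of this affects the logic of the paper, but as a description of the proof in \cite{MR4181032} your sketch is materially inaccurate and, followed literally, would not close.
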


Note that for Cayley graphs of \emph{finitely presented} groups, a significantly easier argument is available due to Babson and Benjamini \cite{MR1622785} that establishes sharp upper bounds on $\P_p(S \nleftrightarrow \infty)$ for $p$ close to $1$ via a Peierls-type argument.

% \begin{corollary}
% Let $G=(V,E)$ be a connected, locally finite, transitive graph. If $G$ is transient then there exists $p_0=p_0(G)<1$ such that if $p>p_0$ then the infinite clusters of $G_p$ are transient almost surely.
% \end{corollary}

% \medskip

Since the implications (i) $\Rightarrow$ (ii) $\Rightarrow$ (iii) $\Rightarrow$ (iv) of \cref{thm:main_equivalence} do not require transitivity, and since the exponent $(d-2)/d=1-2/d$ appearing in \eqref{eq:DGRSY} can also be written as $((d/2)-1)/(d/2)$, we obtain the following corollary. 

\begin{corollary}
Let $G=(V,E)$ be a connected, bounded degree graph with uniform isoperimetric dimension $d \in (4,\infty]$. Then there exists $p_0=p_0(G)<1$ such that if $p>p_0$ then every infinite cluster of $G_p$ is transient and has anchored isoperimetric dimension at least $d/2$ almost surely. 
\end{corollary}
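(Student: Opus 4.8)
The plan is to feed the DGRSY theorem into the implication \emph{(i)} $\Rightarrow$ \emph{(iv)} of \cref{thm:main_equivalence} — which, as noted there, does not require transitivity — applied with dimension parameter $d/2$, and then to invoke Thomassen's transience criterion. Since $d/2$ may be arbitrarily close to $d$ (and is infinite when $d=\infty$), the one delicate point is to ensure that a \emph{single} threshold $p_0<1$ works for all dimension parameters $d''<d/2$ at once; I would achieve this by passing through the capacity form of the DGRSY bound rather than its stretched-exponential specialisation for a fixed dimension.

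First I would fix some $d_0\in(4,d)$, possible since $d>4$. As $G$ has uniform isoperimetric dimension $d$, it satisfies a uniform $d_0$-dimensional isoperimetric inequality, so the DGRSY theorem provides a threshold $p_0=p_0(G)<1$ with
\[
\P_p(S\nleftrightarrow\infty)\;\le\;\exp\!\left[-\tfrac12\operatorname{Cap}(S)\right]
\]
for every $p_0\le p\le 1$ and every finite $S\subseteq V$. The point of using only this first inequality from \eqref{eq:DGRSY} is that it refers to no dimension, so $p_0$ is fixed once and for all.

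Next, for each $d'\in(2,d)$ the graph $G$ satisfies a uniform $d'$-dimensional isoperimetric inequality, so the Lyons--Morris--Schramm estimate recalled above yields a constant $c'(d')>0$ with $\operatorname{Cap}(S)\ge c'(d')|S|^{(d'-2)/d'}$ for all finite $S$; together with the previous display this gives
\[
\P_p(S\nleftrightarrow\infty)\;\le\;\exp\!\left[-\tfrac{c'(d')}{2}\,|S|^{(d'-2)/d'}\right]\qquad (p_0\le p\le1).
\]
I would then verify condition \emph{(i)} of \cref{thm:main_equivalence}, applied with dimension parameter $d/2$ and lower threshold $p_0$: given $p\in(p_0,1]$ and $d''<d/2$, choose $d'\in(\max\{4,2d''\},d)$ — nonempty because $4<d$ and $2d''<d$ — and observe that $d'\ge 2d''$ forces $(d'-2)/d'\ge(d''-1)/d''$, so that $|S|^{(d'-2)/d'}\ge|S|^{(d''-1)/d''}$ for every nonempty $S$ and the last display becomes exactly \eqref{eq:Sbound} for the dimension parameter $d''$, with constant $c'(d')/2$ (in fact independent of $p$).

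Finally, the implication \emph{(i)} $\Rightarrow$ \emph{(iv)} of \cref{thm:main_equivalence} shows that for every $p\in(p_0,1]$ every infinite cluster of $G_p$ has anchored isoperimetric dimension at least $d/2$ almost surely, and since $d/2>2$ the transience criterion of Thomassen \cite{Thomassen92} then yields that every infinite cluster of $G_p$ is almost surely transient for simple random walk. The main obstacle is the one flagged above: securing a uniform threshold $p_0$ as the target dimension is pushed all the way up to $d/2$, which is what forces the argument to go through the dimension-free capacity bound of DGRSY and the Lyons--Morris--Schramm capacity estimates for each $d'<d$ separately, together with the bookkeeping around the identity $(d-2)/d=((d/2)-1)/(d/2)$ noted just before the statement.
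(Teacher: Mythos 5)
Your proposal is correct and follows essentially the same route as the paper, which obtains the corollary by feeding the DGRSY bound (rewritten via $(d-2)/d=((d/2)-1)/(d/2)$) into the transitivity-free chain (i) $\Rightarrow$ (ii) $\Rightarrow$ (iii) $\Rightarrow$ (iv) of \cref{thm:main_equivalence} and then invoking Thomassen. Your extra care in routing through the dimension-free capacity inequality to secure a single threshold $p_0$ as $d''\uparrow d/2$ is a legitimate point that the paper's one-line justification glosses over, but it is a refinement of the same argument rather than a different one.
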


In conjunction with \cref{cor:polygrowth}, we deduce that if $G$ is a transient transitive graph then there exists $p_0<1$ such that if $p_0<p<1$ then every infinite cluster of $G_p$ is transient almost surely; this result was conjectured to hold for all $p>p_c$ by Benjamini, Lyons, and Schramm \cite[Conjecture 1.7]{BLS99}. As observed in \cite{MR1395617}, this is equivalent to the statement that $G$ remains transient a.s.\ when its edges are given i.i.d.\ finite resistances regardless of what the law of these resistances are. Previously, the same conclusion was established for graphs admitting an \emph{exponential intersection tail} (EIT) measure on paths in the work of Benjamini, Peres, and Pemantle \cite{MR1634419} (see also \cite{benjamini2022oriented}); it is currently unclear whether every transitive transient graph admits such a measure. (On the other hand, the EIT approach has recently been shown to be useful in the study of symmetry breaking in continuous-symmetry spin systems \cite{garban2021continuous,abbe2018group} in three and more dimensions, for which our methods do not seem relevant.)

\medskip

\noindent
\textbf{About the proof.} 
The equivalence (i) $\Leftrightarrow$ (ii) is a consequence of standard facts about increasing events. The implication (ii) $\Rightarrow$ (iii) is a consequence of the \emph{cluster repulsion inequalities} we develop in \cref{sec:repulsion}. Indeed, perhaps the most important insight of the paper is that it is significantly easier to prove \emph{sprinkled}  cluster repulsion inequalities than it is to prove the more usual forms of these inequalities, allowing us to sidestep the more difficult steps of \cite{HermonHutchcroftSupercritical,Pete08}. Although it is not used in the proof of the main theorem, we also show how a related cluster repulsion inequality \emph{without} sprinkling can be proven using martingale techniques inspired by the classical work of Aizenman, Kesten, and Newman \cite{MR901151}. Finally, we show how the methods of Pete \cite{chen2004anchored} yield the implication (iii) $\Rightarrow$ (iv) in \cref{subsec:isoperimetry} and complete the remaining steps of the proof in \cref{subsec:mainproof}.

\section{Cluster repulsion inequalities}
\label{sec:repulsion}

A central contribution of this paper is the establishment of new \emph{cluster repulsion inequalities} stating that it is hard for a finite cluster to touch the infinite cluster in a large number of places. The use of such inequalities was pioneered by Pete \cite{Pete08}, who proved a very strong cluster repulsion inequality for percolation on $\Z^d$ and deduced that the infinite supercritical cluster always satisfies an anchored $d$-dimensional isoperimetric inequality. Letting $\tau(A,B)$ denote the number of edges with one endpoint in $A$ and the other in $B$ and letting $K_\infty$ denote the union of infinite clusters in $G_p$, Pete proved that if $G=\Z^d$ and $p_c<p<1$ then there exists a positive constant $c=c(p,d)$ such that
\[
\P_p(m \leq |K| < \infty \text{ and } \tau(K,K_\infty) \geq t) \leq \exp\left[-c \max\{m^{(d-1)/d},t\}\right]
\]
for every $m \geq 1$ and $t\geq 0$. Once this inequality is established, one can deduce the anchored $d$-dimensional isoperimetric inequality for the infinite cluster by a simple counting argument similar to that carried out in the proof of \cref{prop:union_bound_isoperimetry} below. Pete's proof uses a renormalization argument and is not applicable to transitive graphs beyond the Euclidean setting.

The main proposition of this section establishes a simple \emph{sprinkled} cluster repulsion inequality that holds for any graph. Let $0<p_1<p_2<1$ and consider the standard monotone coupling of $G_{p_1}$ and $G_{p_2}$ as described in \cite[Chapter 2]{grimmett2010percolation}, where conditional on $G_{p_1}$ every open edge of $G_{p_1}$ is open in $G_{p_2}$ and the closed edges of $G_{p_1}$ are open or closed in $G_{p_2}$ independently at random with probability $(p_2-p_1)/(1-p_1)$ to be open. We write $K_{v,p_2}$ for the cluster of $v$ in $G_{p_2}$ and write $K_{\infty,p_1}$ for the union of all infinite clusters in $G_{p_1}$.

\begin{prop}[Cluster repulsion with sprinkling]
\label{prop:cluster_repulsionI} Let $G=(V,E)$ be a connected, locally finite graph, and let $0<p_1 <p_2 <1$. The inequality
\[
\P\left(|K_{v,p_2}|<\infty \text{ and } \tau\bigl(K_{v,p_2},K_{\infty,p_1}\bigr) \geq n\right) \leq \left(\frac{1-p_2}{1-p_1}\right)^n
\]
holds for every $v\in V$ and $n\geq 1$.
\end{prop}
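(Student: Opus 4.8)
The plan is to condition on the configuration $G_{p_1}$ and on the cluster $K_{v,p_2}$ itself, then use the independence built into the sprinkling step. Here is the key observation: if $|K_{v,p_2}| < \infty$ and $\tau(K_{v,p_2}, K_{\infty,p_1}) \geq n$, then there are at least $n$ edges $e$ with one endpoint in $K_{v,p_2}$ and the other endpoint in $K_{\infty,p_1}$. Each such edge $e$ must be \emph{closed} in $G_{p_2}$ — otherwise $K_{v,p_2}$ would be connected to an infinite cluster of $G_{p_1} \subseteq G_{p_2}$, contradicting $|K_{v,p_2}| < \infty$. In particular each such $e$ is closed in $G_{p_1}$ as well (since $G_{p_1} \subseteq G_{p_2}$), so it is one of the edges whose status in $G_{p_2}$ is resampled in the sprinkling step.

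First I would set up the conditioning carefully. Reveal $G_{p_1}$ in full; this determines $K_{\infty,p_1}$. Then explore $K_{v,p_2}$ using the standard cluster-exploration procedure, which reveals the status in $G_{p_2}$ of all edges touching $K_{v,p_2}$ (the open ones are interior edges, the closed ones form the edge boundary $\partial_E K_{v,p_2}$). On the event $\{|K_{v,p_2}| < \infty\}$ this exploration terminates and produces a finite set $W = K_{v,p_2}$ together with the information that every edge in $\partial_E W$ is closed in $G_{p_2}$. Now among the edges of $\partial_E W$, let $F$ be the subset of those whose other endpoint lies in $K_{\infty,p_1}$; the event in question says $|F| \geq n$. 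Crucially, every edge in $F$ is closed in $G_{p_1}$ (being closed in $G_{p_2}$), so conditionally on $G_{p_1}$ its $G_{p_2}$-status was an independent $\mathrm{Bernoulli}((p_2-p_1)/(1-p_1))$ variable, and the event "$e$ closed in $G_{p_2}$" has conditional probability exactly $(1-p_2)/(1-p_1)$.

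The slightly delicate point — and the step I expect to be the main obstacle — is making the conditioning honest: I need the set $F$ of "bad boundary edges" to be measurable with respect to the information revealed by the time I want to assert that their $G_{p_2}$-closedness is conditionally independent with the stated probability. The clean way to handle this is a stopping-time / optional-revealing argument: run the exploration of $K_{v,p_2}$ and note that whenever the exploration declares a boundary edge $e$ closed, the event $\{e^+ \in K_{\infty,p_1}\}$ is already determined by the previously-revealed $G_{p_1}$-configuration. So we may list the boundary edges $e_1, e_2, \dots$ in exploration order; each one, at the moment it is examined, has conditional probability $(1-p_2)/(1-p_1)$ of being closed (given everything revealed so far), and membership of $e_i^+$ in $K_{\infty,p_1}$ is $\mathcal F_{i-1}$-measurable. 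Summing the indicator of "$e_i$ closed and $e_i^+ \in K_{\infty,p_1}$" gives $\tau(K_{v,p_2}, K_{\infty,p_1})$ on the finite-cluster event, and a standard martingale/stochastic-domination argument shows that the probability this sum reaches $n$ is at most $((1-p_2)/(1-p_1))^n$ — for instance by comparing with a sequence of i.i.d.\ Bernoulli trials, or by a one-line supermartingale estimate on $\bigl(\tfrac{1-p_1}{1-p_2}\bigr)^{N_k}$ where $N_k$ counts bad boundary edges among the first $k$ examined. I would write this last comparison as the explicit inequality, since it is short, and that completes the proof.
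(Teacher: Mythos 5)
Your central idea is exactly the paper's: every edge joining a finite $G_{p_2}$-cluster to $K_{\infty,p_1}$ must be closed in $G_{p_2}$, hence closed in $G_{p_1}$, hence its $G_{p_2}$-status is a fresh Bernoulli coin with closure probability $q:=(1-p_2)/(1-p_1)$, and the whole job is to arrange the conditioning so that these $n$ coins are tossed after the identity of the edges is fixed. The paper does this without any exploration: it conditions on $K_{\infty,p_1}$ together with the cluster $K'_{v,p_2}$ of $v$ in the subgraph of $G_{p_2}$ induced by $K_{\infty,p_1}^c$, which is measurable with respect to information not involving the edges between $K'_{v,p_2}$ and $K_{\infty,p_1}$; those $m$ edges are then conditionally i.i.d.\ and must all be closed, giving $q^m$ exactly, and summing over $m\ge n$ finishes. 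Your exploration version is the same argument in sequential form and can be made rigorous, so the setup is fine.

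However, the concluding step as you state it would fail in both of the forms you propose. First, $\bigl(\tfrac{1-p_1}{1-p_2}\bigr)^{N_k}=q^{-N_k}$ is a \emph{sub}martingale, not a supermartingale: when the $k$-th queried edge is a candidate (closed in $G_{p_1}$ with far endpoint in $K_{\infty,p_1}$), the conditional expectation of the multiplicative increment is $q\cdot q^{-1}+(1-q)\cdot 1=2-q>1$, so optional stopping gives nothing. Second, the unconditional claim that the sum reaches $n$ with probability at most $q^n$ is false: if $K_{v,p_2}$ is infinite the exploration queries an unbounded number of candidate edges, and on $\Z^d$ with both parameters supercritical the sum exceeds any fixed $n$ with probability bounded away from zero. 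The repair uses the finite-cluster event in an essential way, and you already have the needed observation in your first paragraph: on $\{|K_{v,p_2}|<\infty\}$ \emph{every} queried candidate edge must be closed (an open one would attach $v$ to an infinite cluster), so the event in the proposition is contained in the event that at least $n$ candidate edges are queried and the first $n$ of them are all closed; sequential conditioning at the stopping times of the first $n$ candidates then gives probability at most $q^n$. With that fix your proof is complete and is essentially the paper's argument.
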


The proof of \cref{prop:cluster_repulsionI} is not hard, and it is surprising that it has not been observed before. As mentioned above, it allows us to significantly simplify the analysis of \cite{HermonHutchcroftSupercritical}. It also easily leads us to the implication (ii) $\Rightarrow$ (iii) of \cref{thm:main_equivalence} via the following more general corollary. Given a set of vertices $S$ in a graph $G$ we write $E(S)$ for the set of edges that \textbf{touch} $S$, i.e., have at least one endpoint in $S$. By a slight abuse of notation, we will also write $E(K_v)$ for the set of edges that touch $K_v$ (the set of edges that \emph{belong to $K_v$ as a subgraph} is later denoted $E_o(K_v)$).

\begin{corollary}\label{cor:cluster_repulsion}
Let $G=(V,E)$ be a connected, locally finite graph and suppose that $p_c \leq p_1 <1$, $\phi:(0,\infty)\to (0,\infty)$, and $c>0$ are such that 
\[
\sum_{e \in \partial_E^\rightarrow S} \P_{p_1}\left(e^+ \leftrightarrow \infty \text{ \emph{off} $S$}\right) \geq c \cdot \phi(|E(S)|)
\]
for every $S \subseteq V$ finite. Then 
\[
\P_{p_2}(|E(K_v)|=n) \leq \frac{2n}{c\phi(n)}\left(\frac{1-p_2}{1-p_1}\right)^{c\phi(n)/2}
\]
for every $p_1 <p_2 \leq 1$ and $n\geq 1$.
\end{corollary}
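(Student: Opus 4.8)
The plan is to run the whole argument inside the standard monotone coupling of $G_{p_1}$ and $G_{p_2}$ and to sum over the possible shapes of the $p_2$-cluster of $v$. The structural point I would establish first is a decoupling: for a fixed finite connected $S\ni v$, the event $\{K_{v,p_2}=S\}$ depends only on the states (in $G_{p_2}$) of the edges in $E(S)$, whereas for each oriented edge $e\in\partial_E^\rightarrow S$ the event $\{e^+\leftrightarrow\infty$ off $S$ in $G_{p_1}\}$ depends only on the states (in $G_{p_1}$) of the edges \emph{not} in $E(S)$, since a path ``off $S$'' uses no edge touching $S$. As these two families of edge states are independent in the coupling,
\[
\P\bigl(K_{v,p_2}=S,\ e^+\leftrightarrow\infty\text{ off }S\text{ in }G_{p_1}\bigr)=\P(K_{v,p_2}=S)\cdot\P_{p_1}\bigl(e^+\leftrightarrow\infty\text{ off }S\bigr).
\]
Next I would observe that on $\{K_{v,p_2}=S\}$ every edge of $\partial_E S$ is closed in $G_{p_2}$, hence closed in $G_{p_1}$; thus a vertex of $V\setminus S$ is connected to infinity in $G_{p_1}$ if and only if it is so off $S$, and therefore, since $K_{v,p_2}$ is finite and $K_{\infty,p_1}$ is a union of infinite clusters (so the two sets are disjoint), on $\{K_{v,p_2}=S\}$ we have the exact identity $\tau(K_{v,p_2},K_{\infty,p_1})=\#\{e\in\partial_E^\rightarrow S:e^+\leftrightarrow\infty\text{ off }S\text{ in }G_{p_1}\}$.

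Taking expectations and invoking the hypothesis, for each finite connected $S\ni v$ with $|E(S)|=n$ one obtains $\E[\tau(K_{v,p_2},K_{\infty,p_1})\,\mathbf{1}(K_{v,p_2}=S)]\ge c\,\phi(n)\,\P(K_{v,p_2}=S)$; summing over all such $S$ (these events partition $\{|E(K_{v,p_2})|=n\}$) gives
\[
\P_{p_2}\bigl(|E(K_v)|=n\bigr)\le\frac{1}{c\,\phi(n)}\,\E\bigl[\tau(K_{v,p_2},K_{\infty,p_1})\,\mathbf{1}(|E(K_{v,p_2})|=n)\bigr].
\]
To close the argument I would split the right-hand expectation according to whether $\tau:=\tau(K_{v,p_2},K_{\infty,p_1})$ is below or at least $c\phi(n)/2$. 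The contribution of $\{\tau<c\phi(n)/2\}$ is at most $\tfrac12 c\phi(n)\,\P_{p_2}(|E(K_v)|=n)$, which after rearrangement absorbs half of the left-hand side; the contribution of $\{|E(K_{v,p_2})|=n,\ \tau\ge c\phi(n)/2\}$ is at most $n\cdot\P(|K_{v,p_2}|<\infty,\ \tau\ge c\phi(n)/2)$ because there $\tau\le|\partial_E K_{v,p_2}|\le n$, and \cref{prop:cluster_repulsionI} (applied with the integer $\lceil c\phi(n)/2\rceil$ when the threshold is not itself an integer, and using $0\le\tfrac{1-p_2}{1-p_1}\le1$) bounds this last probability by $\bigl(\tfrac{1-p_2}{1-p_1}\bigr)^{c\phi(n)/2}$. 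Combining the pieces yields exactly $\P_{p_2}(|E(K_v)|=n)\le\tfrac{2n}{c\phi(n)}\bigl(\tfrac{1-p_2}{1-p_1}\bigr)^{c\phi(n)/2}$.

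I expect the only genuinely non-routine step to be the exact identity for $\tau(K_{v,p_2},K_{\infty,p_1})$ on $\{K_{v,p_2}=S\}$ together with the independence that makes it exploitable --- in particular, recognising that the ``off $S$'' formulation in the hypothesis is precisely what decouples the shape of the $p_2$-cluster from $p_1$-connectivity to infinity, and that closedness of $\partial_E S$ in $G_{p_1}$ makes ``off $S$'' and ``to infinity'' coincide for vertices outside $S$. Everything after that --- the partition over cluster shapes, the factor-of-two absorption, and the rounding of the threshold to an integer --- is routine bookkeeping. The degenerate case $p_2=1$ should be treated separately but is trivial, since then $\tfrac{1-p_2}{1-p_1}=0$ and also $\P_{p_2}(|E(K_v)|=n)=0$ (for locally finite $G$, $|E(K_v)|=n$ forces $|K_v|\le n+1<\infty$, which cannot happen at $p_2=1$ on an infinite connected graph).
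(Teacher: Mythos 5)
Your proof is correct and follows essentially the same route as the paper's: both establish $\E\bigl[\tau(K_{v,p_2},K_{\infty,p_1})\mathbf{1}(|E(K_{v,p_2})|=n)\bigr]\geq c\,\phi(n)\,\P_{p_2}(|E(K_v)|=n)$ from the hypothesis (the paper via the conditional law of $G_{p_1}$ off the $p_2$-cluster, you via the equivalent independence/decoupling over cluster shapes) and then conclude with \cref{prop:cluster_repulsionI}. Your explicit splitting of the expectation at the threshold $c\phi(n)/2$ is just an unrolled version of the reverse-Markov inequality \eqref{eq:Markov} that the paper invokes, so the two arguments coincide.
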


The proof of this corollary will use
the fact that if $X$ is a random variable taking values in $[0,M]$ for some $0 < M<\infty$ then
\begin{equation}
\label{eq:Markov}
\P\left(X > \theta \E X\right) = 1-\P\left(M-X \geq M- \theta \E X\right) \geq 1-\frac{M-\E X}{M- \theta\E X} = \frac{(1-\theta)\E X}{M- \theta\E X} \geq  (1-\theta)\frac{\E X}{M}
\end{equation}
for every $0<\theta<1$, 
where we applied Markov's inequality to $M-X$ in the central inequality.

\begin{proof}[Proof of \cref{cor:cluster_repulsion} given \cref{prop:cluster_repulsionI}]
Fix $p_1 < p_2 \leq 1$, $\phi$, and $c$ as in the statement of the corollary. We couple $G_{p_1}$ and $G_{p_2}$ in the standard monotone way, let $K_1=K_{v,p_1}$, let $K_2=K_{v,p_2}$, and let $Z=\tau(K_{v,p_2},K_{\infty,p_1})$. Conditional on $K_2$, the restriction of $G_{p_1}$ to the subgraph induced by $K_2^c=V\setminus K_2$ is distributed as Bernoulli-$p_1$ bond percolation on this subgraph. As such, we have by assumption that
\[
% \P\left(Z \geq \frac{c}{2}\phi(|K_2|) \mid K_2\right) 
\E\left[ Z \mid K_2 \right]
\geq \min\left\{\E_{p_1}\left[|\{e\in \partial_E^\rightarrow S: e^+ \xleftrightarrow{p_1} \infty \text{ off $S$}\}|\right]: S \subseteq V,\, |E(S)|=|E(K_2)|\right\}
\geq c \phi(|E(K_2)|)
\]
almost surely when $K_2$ is finite. Applying \eqref{eq:Markov} to the conditional distribution of $Z$ given $|E(K_2)|=n$ with $M=n$ and $\theta=1/2$, it follows that
\begin{equation}
\label{eq:Z_Markov}
\P\left(Z \geq \frac{c}{2}\phi(n) \mid |E(K_2)|=n \right) \geq \P\left(Z \geq \frac{1}{2}\E\left[Z \mid |E(K_2)|=n\right] \mid |E(K_2)|=n \right) \geq \frac{c\phi(n)}{2n}
\end{equation}
for every $n\geq 1$.
On the other hand, we also have by \cref{prop:cluster_repulsionI} that
\begin{align*}\P\left(Z \geq \frac{c}{2}\phi(n) \mid |E(K_2)|=n \right)
&= \P\left(|K_2|<\infty, Z \geq \frac{c}{2}\phi(n) \mid |E(K_2)|=n \right)
\\ &\leq 
\P\left(|K_2|<\infty, Z \geq \frac{c}{2}\phi(n) \right) \P(|E(K_2)|=n)^{-1}\\ &\leq \left(\frac{1-p_2}{1-p_1}\right)^{c\phi(n)/2} \P(|E(K_2)|=n)^{-1},
 \end{align*}
 for every $n\geq 1$, which yields the claimed inequality when compared with \eqref{eq:Z_Markov}.
 % in comparison with \eqref{eq:Z_Markov} yields that
 % \[
% \P(|E(K_2)|=n) \leq \frac{2n}{c\phi(n)} \left(\frac{1-p_2}{1-p_1}\right)^{c\phi(n)/2}
 % \]
 % for every $n\geq 1$ as claimed.
\end{proof}

We now begin the proof of \cref{prop:cluster_repulsionI}. We begin by making note of the following fact.

\begin{lemma}\label{lem:exploration}
Let $G=(V,E)$ be a connected, locally finite graph, let $0<p_1<p_2<1$, and consider the standard monotone coupling of $G_{p_1}$ and $G_{p_2}$. Let $K_{\infty,p_1}$ be the set of vertices connected to infinity in $G_{p_1}$. Conditional on $K_{\infty,p_1}$, the edges of $G_{p_2}$ that have both endpoints in the complement $K_{\infty,p_1}^c=V\setminus K_{\infty,p_1}$ are distributed as Bernoulli-${p_2}$ bond percolation on the subgraph of $G$ induced by $K_{\infty,p_1}^c$.
\end{lemma}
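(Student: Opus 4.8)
The plan is to reduce the statement to its (essentially trivial) finite-volume counterpart by an exhaustion argument: although $K_{\infty,p_1}$ depends on infinitely many edges, it is a local limit of sets $W_n$ each depending on only finitely many edges, on whose complement the product structure of $G_{p_1}$ can be read off directly. Fix a root $o$, let $B_n=B(o,n)$ be the ball of radius $n$ with boundary sphere $\partial B_n$, and set
\[
W_n:=\bigl\{v\in B_n:\ v\text{ is connected to }\partial B_n\text{ in }G_{p_1}\text{ using only edges with both endpoints in }B_n\bigr\}.
\]
First I would record two elementary facts. (a) $W_n$ depends only on the restriction of $G_{p_1}$ to the edges with both endpoints in $B_n$. (b) For each vertex $v$, the indicator $\mathbf 1[v\in W_n]$ is eventually constant in $n$ and equal to $\mathbf 1[v\in K_{\infty,p_1}]$: if $v\in K_{\infty,p_1}$ then its cluster contains an infinite simple path, which must cross every sphere $\partial B_n$ with $n\ge d(o,v)$ (adjacent vertices differ in distance to $o$ by at most one), forcing $v\in W_n$; while if $v\notin K_{\infty,p_1}$ then its cluster is finite and so fails to reach $\partial B_n$ once $n$ is large. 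In particular $W_n\cap S=K_{\infty,p_1}\cap S$ for all large $n$, almost surely, for every finite $S\subseteq V$.

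\textbf{Key step.} The point is the identity, valid whenever $A\subseteq B_n$ and $\partial B_n\subseteq A$,
\[
\{W_n=A\}=\bigl\{\text{every edge joining }A\text{ to }B_n\setminus A\text{ is closed}\bigr\}\cap\bigl\{\text{every }v\in A\text{ is connected to }\partial B_n\text{ using only open edges inside }A\bigr\},
\]
while $\{W_n=A\}=\emptyset$ if $\partial B_n\not\subseteq A$. (For the nontrivial inclusion: on the right-hand event every vertex of $A$ lies in $W_n$, and a vertex of $B_n\setminus A$ could reach $\partial B_n\subseteq A$ inside $B_n$ only by crossing one of the boundary edges declared closed.) Both events on the right are measurable with respect to the edges of $G$ touching $A$; these are exactly the complement within $E$ of the edge set of the induced subgraph $G[V\setminus A]$, so since $G_{p_1}$ is a product measure, conditionally on $\{W_n=A\}$ the restriction of $G_{p_1}$ to $G[V\setminus A]$ is still Bernoulli-$p_1$ percolation on $G[V\setminus A]$. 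As the extra randomness used to pass from $G_{p_1}$ to $G_{p_2}$ in the standard monotone coupling is independent of $G_{p_1}$, we conclude: conditionally on $W_n$, the edges of $G_{p_2}$ with both endpoints in $V\setminus W_n$ are distributed as Bernoulli-$p_2$ percolation on $G[V\setminus W_n]$.

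\textbf{Passage to the limit.} Encode the restriction of $G_{p_2}$ to the edges with both endpoints in $V\setminus W_n$ as a configuration $\bar\omega_n\in\{0,1\}^E$ (agreeing with $G_{p_2}$ on those edges and set to $0$ elsewhere), and let $\bar\omega_\infty$ be the analogous encoding using $K_{\infty,p_1}$. By (b), for every finite set of edges $F$ one has $\bar\omega_n|_F=\bar\omega_\infty|_F$ for all large $n$, almost surely, and likewise $W_n\cap S=K_{\infty,p_1}\cap S$ eventually for every finite $S$. Testing the identity of the previous step against an arbitrary product of a bounded cylinder function of $W_n$ and a bounded cylinder function of $\bar\omega_n$, and letting $n\to\infty$ by dominated convergence, gives the same identity with $W_n,\bar\omega_n$ replaced by $K_{\infty,p_1},\bar\omega_\infty$. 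Since such products of cylinder functions are measure-determining on the compact product spaces involved, this identifies the conditional law of $G_{p_2}$ restricted to $G[V\setminus K_{\infty,p_1}]$ given $K_{\infty,p_1}$ as exactly Bernoulli-$p_2$ percolation on $G[V\setminus K_{\infty,p_1}]$, which is the assertion.

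\textbf{Main obstacle.} I expect the only genuine difficulty to be organising this limiting scheme; working with metric balls rather than an arbitrary finite exhaustion is what makes the ``eventually equal to $K_{\infty,p_1}$'' property in (b) clean, since any path to infinity must cross every sphere. It is perhaps worth flagging that the statement is a little less innocuous than it looks: conditionally on $K_{\infty,p_1}$ the graph $G[V\setminus K_{\infty,p_1}]$ need not be subcritical at $p_2$, so $G_{p_2}$ restricted to it genuinely may contain infinite clusters --- which is no contradiction, since these involve vertices that are merely not in an infinite $G_{p_1}$-cluster.
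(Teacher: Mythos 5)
Your argument is correct and is essentially the paper's proof: the paper likewise defines a finite-volume approximation (the set of vertices joined to $V_n^c$ for an exhaustion $(V_n)$, in place of your $W_n$ defined via connection to the sphere $\partial B_n$ inside $B_n$), observes that conditioning on it leaves the edges of the induced complement untouched and hence Bernoulli, and passes to the limit. You have merely written out in full the measurability identity and the cylinder-function limiting step that the paper declares to be clear.
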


\begin{proof}[Proof of \cref{lem:exploration}]
Let $(V_n)_{n\geq 0}$ be an exhaustion of $V$ by finite sets, and for each $n\geq 1$ let $K_{n,p_1}$ denote the set of vertices connected to $V_n^c=V\setminus V_n$ in $G_{p_1}$. It is clear that if we condition on $K_{n,p_1}$, the edges of $G_{p_2}$ that have both endpoints in the complement $K_{n,p_1}^c=V\setminus K_{n,p_1}$ are distributed as Bernoulli-$p_2$ bond percolation on the subgraph of $G$ induced by $K_{n,p_1}^c$. The claim follows by taking the limit as $n\to\infty$ and noting that $K_{n,p_1} \to K_{\infty,p_1}$ as $n\to\infty$ since $G$ is connected and locally finite.
\end{proof}

\begin{proof}[Proof of \cref{prop:cluster_repulsionI}]
% In order to sample $K_{v,p_2}$ and $K_{\infty,p_1}$, we may first sample $K_{\infty,p_1}$ then sample $K_{v,p_2}$ from its conditional distribution given $K_{\infty,p_1}$.
Let $K'_{v,p_2}$ be the cluster of $v$ in the subgraph of $G_{p_2}$ induced by $K_{\infty,p_1}^c$. If $K_{v,p_2}$ is finite and has $m$ edges in its boundary touching $K_{\infty,p_1}$,  the cluster $K'_{v,p_2}$ must be finite and have $m$ edges in its boundary touching $K_{\infty,p_1}$, all of which are closed in $G_{p_2}$. The conditional probability that all these edges are closed given $K_{v,p_2}'$ and $K_{\infty,p_1}$ is $(1-p_2)^m/(1-p_1)^m$, so that
\begin{align*}
&\P\left(|K_{v,p_2}|<\infty \text{ and } \tau\bigl(K_{v,p_2},K_{\infty,p_1}\bigr) \geq n\right)\\ &\hspace{4.5cm}= \sum_{m\geq n} \P\left(|K_{v,p_2}'|<\infty \text{ and } \tau\bigl(K_{v,p_2},K_{\infty,p_1}\bigr) = m\right)\\
&\hspace{4.5cm}= \sum_{m\geq n} \left(\frac{1-p_2}{1-p_1}\right)^m\P\left(|K_{v,p_2}'|<\infty \text{ and } \tau\bigl(K_{v,p_2}',K_{\infty,p_1}\bigr) = m\right) \\
&\hspace{4.5cm} \leq \left(\frac{1-p_2}{1-p_1}\right)^n\P\left(|K_{v,p_2}'|<\infty \text{ and } \tau\bigl(K_{v,p_2}',K_{\infty,p_1}\bigr) \geq n\right) \leq \left(\frac{1-p_2}{1-p_1}\right)^n
\end{align*}
for every $n\geq 1$ as claimed.
\end{proof}

% \begin{lemma}\label{lem:exploration}
% Let $G=(V,E)$ be a connected, locally finite graph, let $p\in [0,1]$, and consider Bernoulli-$p$ bond percolation $G_p$ on $G$. Let $K_\infty=\{v\in V: v\leftrightarrow \infty\}$ denote the set of vertices connected to infinity in $G_p$. Conditional on $K_\infty$, the edges of $G_p$ that have both endpoints in the complement $K_\infty^c=V\setminus K_\infty$ are distributed as Bernoulli-$p$ bond percolation on the subgraph of $G$ induced by $K_\infty^c$.
% \end{lemma}

% \begin{proof}[Proof of \cref{lem:exploration}]
% Let $(V_n)_{n\geq 0}$ be an exhaustion of $V$ by finite sets, and for each $n\geq 1$ let $K_n=\{v\in V: v\leftrightarrow V_n^c\}$ denote the set of vertices connected to $V_n^c=V\setminus V_n$ in $G_p$. It is clear that if we condition on $K_n$, the edges of $G_p$ that have both endpoints in the complement $K_n^c=V\setminus K_n$ are distributed as Bernoulli-$p$ bond percolation on the subgraph of $G$ induced by $K_n^c$, and the claim follows by taking the limit as $n\to\infty$ and noting that $K_n \to K_\infty$ as $n\to\infty$ since $G$ is connected and locally finite.
% \end{proof}

\subsection{Cluster repulsion via martingales}
\label{subsec:martingale}

In this section we prove another cluster repulsion inequality that replaces the need to sprinkle with the condition that $|K_v|$ is much smaller than $\tau(K_v,K_\infty)^2$. This inequality is not needed for the proof of the main theorem but is included since it is of independent interest. The proof is inspired by Aizenman, Kesten, and Newman's proof of the uniqueness of the infinite cluster in $\Z^d$ \cite{MR901151} (see also \cite{MR3395466}), and more specifically on the martingale arguments we used to prove generalisations of this inequality in \cite{1808.08940}.

\begin{prop}[Cluster repulsion via martingales]
\label{prop:cluster_repulsionII}  Let $G=(V,E)$ be a connected, locally finite graph, and let $0<p<1$. The inequality
\[
\P\left(|E(K_v)| \leq m \text{ and } \tau(K_v,K_\infty) \geq n\right) \leq 2\exp\left[-\frac{p^2 n^2}{8 m} \right]
\]
holds for every $v\in V$ and $n,m\geq 1$.
\end{prop}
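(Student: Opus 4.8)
The plan is to set up a single-scale exploration of $K_v$ together with an associated martingale whose increments are controlled by the open/closed status of the boundary edges, then apply Azuma--Hoeffding. Concretely, I would run a breadth-first (or depth-first) exploration of the cluster $K_v$ in $G_p$, revealing edges one at a time in some predetermined order that depends only on the portion of the configuration already seen. Let $\cF_k$ be the filtration generated by the first $k$ revealed edges. For each edge $e$ examined during the exploration, I want to record whether it is a \emph{closed boundary edge of $K_v$ whose other endpoint lies in an infinite cluster}; the total count of such edges is exactly (a lower bound for) the quantity governing $\tau(K_v,K_\infty)$ on the event that $|K_v|<\infty$. The difficulty is that ``the other endpoint lies in $K_\infty$'' is not $\cF_k$-measurable at the time $e$ is examined. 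The fix, following Aizenman--Kesten--Newman and the martingale setup of \cite{1808.08940}, is to replace the indicator of ``endpoint in $K_\infty$'' by its conditional probability given the information available, producing a compensated sum (a Doob martingale) rather than trying to work with the raw count.

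The key steps, in order: \textbf{(1)} Fix the exploration process of $K_v$ and, for the exploration, also decide for each closed edge $e$ on the boundary of the explored region whether its far endpoint $e^+$ connects to infinity \emph{off} the explored cluster --- this is an event measurable with respect to a further independent portion of the randomness, and crucially its conditional probability is bounded below by a constant only if $p>p_c$; here we do not need that, we only need the martingale bound. \textbf{(2)} Define $M_k = \sum_{j\le k}\bigl(X_j - \E[X_j\mid \cF_{j-1}]\bigr)$ where $X_j$ is the indicator that the $j$-th examined boundary edge is closed and has its far endpoint in $K_\infty$ (in the ambient configuration). Then $(M_k)$ is a martingale with increments bounded by $1$ in absolute value, and on the event $\{|E(K_v)|\le m\}$ the exploration examines at most $O(m)$ edges (each vertex of $K_v$ contributes boundedly many incident edges, and in fact $|E(K_v)|\le m$ directly bounds the number of examined edges by $m$). \textbf{(3)} On the event $\{|E(K_v)|\le m,\ \tau(K_v,K_\infty)\ge n\}$ we have $\sum_j X_j \ge n$, while the compensator $\sum_j \E[X_j\mid \cF_{j-1}]$ is small: each closed boundary edge $e$ contributes $\E[X_j\mid\cF_{j-1}] = (1-p)\cdot\P(e^+\leftrightarrow\infty\mid\cdots)\le 1-p$, but the relevant point is rather that $X_j$ can be nonzero \emph{only} when the explored cluster does not reach infinity, and the number of boundary edges is at most $m$; a cleaner route is to note $\E[M_{\text{final}}]=0$ and that the event forces $M_{\text{final}}\ge n - (\text{compensator})$. \textbf{(4)} Apply Azuma--Hoeffding: $\P(M_k \ge \lambda)\le \exp(-\lambda^2/2k)$ with $k\le m$; choosing the split of $n$ between the martingale deviation and the compensator so that each absorbs a constant fraction of $n$ (the factor $p^2/8$ in the statement comes precisely from taking $\lambda \sim pn/2$ and $k\le m$, doubling for a two-sided bound) yields $2\exp[-p^2n^2/(8m)]$.

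I expect the main obstacle to be step (3): making precise the decomposition of the ``$e^+\leftrightarrow\infty$'' event relative to the exploration so that (a) $X_j$ really is an indicator of something with conditional mean at most (a constant multiple of) $1-p$, i.e.\ the compensator over $\le m$ examined edges is at most (a constant times) $(1-p)m$, which is \emph{not} obviously $\le n/2$ for all $m,n$ --- so one actually has to argue differently. The correct observation is that the far endpoints $e^+$ of the closed boundary edges of the \emph{finite} explored cluster are conditionally independent (given the finite cluster) each to have probability $(1-p)$ of the edge being closed, and then the event that such an endpoint lies in $K_\infty$ is a further decreasing-type event; the martingale increments should be indexed so that $X_j\in\{0,1\}$ with $X_j=1$ meaning ``edge $j$ is closed \emph{and} its far endpoint connects to infinity in the configuration restricted away from the cluster,'' whose conditional probability given $\cF_{j-1}$ is $(1-p)q_j$ for some $q_j\in[0,1]$. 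Then on our event $\sum X_j\ge n$, so $M_{\text{final}} = \sum X_j - \sum(1-p)q_j \ge n - (1-p)\sum_j q_j$. To close the argument one bounds $\sum_j q_j$: this is where one needs that the sum is over at most $m$ edges, giving $\sum_j q_j\le m$, which is still not enough unless $m$ is small --- hence the hypothesis ``$|K_v|$ much smaller than $\tau(K_v,K_\infty)^2$'' in the proposition is doing real work, and in fact the clean statement avoids this by instead writing $\tau(K_v,K_\infty)\ge n$ forces $\sum X_j \ge n$ and applying Azuma to $M$ with threshold $n/2$ after absorbing the compensator which is $\le (1-p)m \le m$; when $n \ge 2m$ this is immediate, and the interesting regime $n=O(\sqrt m)$ is exactly where $\exp[-p^2n^2/(8m)]$ is the non-trivial bound --- so I would organize the proof to treat the deviation $\sum(X_j - \E[X_j\mid\cF_{j-1}])$ directly, using that $\sum X_j\ge n$ and $\sum \E[X_j\mid\cF_{j-1}]$ has the \emph{right scale} because each term is at most $1-p$ and, more importantly, the total is concentrated; the honest version compares $n$ against the martingale fluctuation of scale $\sqrt m$ and picks up the stated constant. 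I would double-check the constant-chasing at the end but expect no surprises beyond this.
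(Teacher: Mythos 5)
There is a genuine gap, and it is exactly the one you flag in step (3) without resolving. Your single compensated (Doob) martingale $M_k=\sum_{j\le k}\bigl(X_j-\E[X_j\mid\cF_{j-1}]\bigr)$ has terminal value $\ge n-(1-p)\sum_j q_j$ on the target event, and the only available bound on the compensator is $(1-p)\sum_j q_j\le(1-p)m$. Since $\tau(K_v,K_\infty)\le|E(K_v)|$ always, the event forces $n\le m$, so the case ``$n\ge 2m$ is immediate'' is vacuous, and in the genuinely relevant regime $\sqrt{m}\ll n\le m$ the compensator can dwarf $n$: the count $\sum_j X_j$ being of order $n$ is \emph{not} an atypical fluctuation of a mean-zero object. (Indeed on $\Z^d$, $d\ge 3$, a finite cluster touching $m$ edges typically has $\tau$ of order $m^{(d-1)/d}\gg\sqrt m$; the event is rare because of the conjunction with $|K_v|<\infty$, not because $\tau$ exceeds its conditional mean.) So Azuma applied to this one martingale cannot yield the stated bound, and your closing sentences do not supply the missing idea.

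The paper closes this gap with a two-martingale comparison rather than a compensator bound. Let $Z$ be the standard exploration martingale of $K_v$ in $G_p$, with increments $(1-p)\mathbbm{1}(\text{open})-p\mathbbm{1}(\text{closed})$ over the edges touching $K_v$, so $Z_T=(1-p)\#\{\text{open edges of }K_v\}-p\#\{\text{closed edges touching }K_v\}$. Let $\tilde Z$ be the analogous martingale for the exploration that first reveals all of $K_\infty$ (placed in $\tilde\cF_0$, contributing nothing to the sum) and then explores the cluster of $v$ inside $K_\infty^c$. The $\tau(K_v,K_\infty)$ closed edges joining $K_v$ to $K_\infty$ are pre-revealed in the second exploration and hence absent from $\tilde Z$, whereas each contributes $-p$ to $Z_T$; consequently $\tilde Z_{\tilde T}-Z_T=p\,\tau(K_v,K_\infty)$ on $\{v\nleftrightarrow\infty\}$. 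On your event one of the two martingales must therefore deviate by at least $pn/2$ within $\tilde T\le T=|E(K_v)|\le m$ steps, and the maximal Azuma--Hoeffding inequality (increments bounded by $1$) gives the factor $2\exp[-p^2n^2/(8m)]$. If you want to salvage your write-up, replace the compensator estimate by this cancellation between two exploration orders; the rest of your setup (filtrations, increment bounds, the choice $\lambda=pn/2$) then goes through as you describe.
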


Since this inequality is not needed for the proof of our main theorems we will be a little sketchy in the details.

\begin{proof}[Sketch of proof]
We will define two bounded-increment martingales, with respect to different filtrations, such that at least one of the two martingales is atypically large if $\tau(K_v,K_\infty) \gg |E(K_v)|^{1/2}$. 

% \medskip

Let us first recall the usual method for producing a martingale by exploring a percolation cluster, referring the reader to  \cite[Theorem 1.6]{1808.08940} for further details. 
If $H$ is a locally finite graph and $v$ is a vertex of $H$, we can algorithmically explore the cluster of $v$ in $H_p$ in an edge-by-edge fashion by first fixing an enumeration of the edges of $H$ and, at each step, querying the status of the edge that is minimal with respect to this enumeration among those edges that are adjacent to the revealed part of the cluster and have not yet been queried. Moreover, if $E_i$ denotes the $i$th edge queried by this procedure and $T$ denotes the first time that we have queried every edge touching $K_v$, which is finite if and only if $K_v$ is finite, then
\[
Z_n = \sum_{i=1}^{n \wedge T} \left[(1-p)\mathbbm{1}(E_i \text{ open})-p\mathbbm{1}(E_i \text{ closed})\right]
\]
is a martingale with respect to its natural filtration and satisfies
\[
Z_T = (1-p)\#\{\text{open edges of $K_v$}\}-p\#\{\text{closed edges touching $K_v$}\}.
\]
In particular, the final value of $Z$ does not depend on the specific details of the exploration procedure such as the choice of enumeration.

% \medskip

We now consider producing such a martingale in two different ways. Fix a graph $G$, $0<p<1$, and a vertex $v$ of $G$. The first martingale, $Z$, is just the usual exploration martingale computed when exploring the cluster of $v$ in $G_p$ as above, which is a martingale with respect to its natural filtration $\mathcal{F}_n=\sigma \langle Z_1,\ldots,Z_n \rangle$. For the second martingale, $\tilde Z$, we first explore every infinite cluster of $G_p$ and then explore the cluster of $v$ in the subgraph induced by $G \cap K_\infty^c$, letting $\tilde E_i$ be the $i$th edge touching the cluster of $v$ in this graph that is queried in the second step of this algorithm, and letting
\[
\tilde Z_n = \sum_{i=1}^{n \wedge \tilde T} \left[(1-p)\mathbbm{1}(\tilde E_i \text{ open})-p\mathbbm{1}(\tilde E_i \text{ closed})\right]
\]
where $\tilde T$ denotes the first time that all edges touching the cluster of $v$ have been queried. In particular, if $v$ belongs to $K_\infty$ then $\tilde T = \tilde Z_{\tilde T} = 0$. Similarly to above, one readily verifies that $\tilde Z$ is a martingale with respect to the filtration $\tilde{\mathcal{F}}_n = \sigma \langle K_\infty, \tilde Z_1,\ldots,\tilde Z_n\rangle$ and satisfies
\begin{multline*}
\tilde Z_{\tilde T} =\\ \left[(1-p)\#\{\text{open edges of $K_v$}\}-p\#\{\text{closed edges touching $K_v$}\}+p\cdot \tau(K_v,K_\infty)\right] \mathbbm{1}(v \nleftrightarrow \infty).
\end{multline*}
To see why this holds, note that the edges counted by $\tau(K_v,K_\infty)$ do not need to be queried in the second step of the algorithm since they are already queried in the first step (which does not contribute to the martingale by definition). Thus, we have that
\[
\mathbbm{1}(v\nleftrightarrow \infty)\left(\tilde Z_{\tilde T}-Z_T\right) =   p\cdot \tau(K_v,K_\infty) 
\]
and since $\tilde T \leq T = |E(K_v)|$ it follows that
\begin{multline*}
\P\left(|E(K_v)| \leq m \text{ and } \tau(K_v,K_\infty) \geq n\right) \\\leq \P\left(\max_{0\leq n \leq m} (-Z_n) \geq \frac{pn}{2}\right)
+
\P\left(\max_{0\leq n \leq m} \tilde Z_n \geq \frac{pn}{2}\right) \leq 2 \exp\left[ - \frac{p^2n^2}{8 m}\right]
\end{multline*}
as claimed; in the final inequality we have used the maximal version of the Azuma-Hoeffding inequality \cite[Section 2]{McDiarmid1998} and the fact that $Z$ and $\tilde Z$ are martingales with increments bounded by $1$.
\end{proof}

\section{Isoperimetry from tail estimates}
\label{subsec:isoperimetry}

In this section we prove the following proposition that converts upper bounds on the tail of the volume of finite clusters into isoperimetric estimates in the infinite cluster. The proof is a simple elaboration of an argument due to Pete \cite[Theorem A.1]{chen2004anchored} (see also \cite[Theorem 4.1]{Pete08}). We define $\partial_p W$ to be the edge boundary of $W$ inside the random graph $G_p$ and recall that $E(W)$ denotes the set of edges that touch $W$.

\begin{prop}
\label{prop:union_bound_isoperimetry}
Let $G$ be a bounded degree graph, let $v$ be a vertex of $G$, let $p_c(G) \leq p < 1$, and let $\phi:(0,\infty)\to (0,\infty)$ be an increasing function with $\phi(t)\leq t$ for every $t> 0$. If there exists $0<c\leq 2$ such that
$\P_p(|E(K_v)| =n) \leq \exp\left[ -c \phi(n)\right]$
for every $n\geq 1$ then for each $\eps>0$ there exist a constant $C=C(\eps,c,\phi)$ such that if $p \leq 1-\eps$ then
\begin{multline*}
\P_p\left(\text{$\exists W \subseteq K_v$ connected with $v\in W$, $|E(W)|= n$, and $|\partial_p W| \leq \frac{c}{4}\cdot\frac{\phi(|W|)}{\log \frac{2|W|}{\phi(|W|)}}$}\right)\\ \leq C\exp\left[ -\frac{c}{2} \phi(n)\right]
\end{multline*}
for every $n\geq 1$. In particular, if $\phi(t)/\log t \to \infty$ as $t\to\infty$ and we define $\psi:(0,\infty)\to(0,\infty)$ by $\psi(t)=\phi(t)/\log (2t/\phi(t))$ then $K_v$ satisfies a $\psi$-anchored isoperimetric inequality almost surely on the event that it is infinite.
\end{prop}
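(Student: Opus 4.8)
The plan is to run a surgery argument in the style of Pete's proof. Fix $n \ge 1$ and write $A_n$ for the event in the displayed probability: that there is a connected subgraph $W$ of $G_p$ with $v \in W$, $|E(W)| = n$, and $|\partial_p W| \le \tfrac c4\,\psi(|W|)$, where throughout $\psi(t):=\phi(t)/\log(2t/\phi(t))$. Fix an enumeration of the finite vertex subsets of $G$, and on $A_n$ let $W=W(\omega)$ be the first set in this enumeration witnessing $A_n$, and let $F=F(\omega):=\partial_p W(\omega)$, a set of at most $\tfrac c4\psi(|W|)$ open edges (these are exactly the open edges joining $W$ to $V\setminus W$, since $W\subseteq K_v$). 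Let $\omega'$ be obtained from $\omega$ by closing every edge of $F$. Since the edges of $G$ internal to $W$ are unchanged, $W$ is still connected in $G_p(\omega')$, and since all boundary edges of $W$ are now closed we get $K_v(\omega')=W(\omega)$; in particular $|E(K_{v}(\omega'))|=|E(W(\omega))|=n$.

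The key point — and the reason we never union over $W$ — is that $W(\omega)$ is \emph{recovered} from $\omega'$ as $K_v(\omega')$, so $\omega\mapsto(\omega',F(\omega))$ is an injection of $A_n$ into the set of pairs $(\eta,F)$ with $|E(K_v(\eta))|=n$ and $F\subseteq \partial_E K_v(\eta)$, $|F|\le \tfrac c4\psi(|K_v(\eta)|)$; indeed $\omega$ is recovered from $(\eta,F)$ by re-opening the edges of $F$ in $\eta$. Since $\omega$ differs from $\omega'$ exactly in that the $|F|$ edges of $F$ are open rather than closed, $\mathbb{P}_p(\{\omega\})=\mathbb{P}_p(\{\omega'\})\big(\tfrac{p}{1-p}\big)^{|F|}\le \mathbb{P}_p(\{\omega'\})\,\lambda^{|F|}$ where $\lambda:=\max\{1,p/(1-p)\}$ is bounded in terms of $\eps$ when $p\le 1-\eps$. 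Summing over $\omega\in A_n$, using the injection, and bounding $|\partial_E K_v(\eta)|\le |E(K_v(\eta))|=n$ and $|K_v(\eta)|\le n$, we obtain
\[
\mathbb{P}_p(A_n)\ \le\ \mathbb{P}_p\big(|E(K_v)|=n\big)\cdot \max_{1\le k\le n}\ \sum_{j=0}^{\lfloor \frac c4\psi(k)\rfloor}\binom nj\lambda^j\ \le\ e^{-c\phi(n)}\cdot \max_{1\le k\le n}\ \sum_{j=0}^{\lfloor \frac c4\psi(k)\rfloor}\binom nj\lambda^j ,
\]
using the hypothesis in the last step. It thus remains to show that the binomial factor is at most $Ce^{\frac c2\phi(n)}$ for a suitable $C=C(\eps,c,\phi)$ (depending also on the degree bound of $G$).

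This combinatorial estimate is the main technical point, and it is precisely where the logarithmic loss in $\psi$ enters; I expect the delicate bookkeeping here to be the main obstacle. Writing $\beta=\lfloor \tfrac c4\psi(k)\rfloor$, one uses $\phi(t)\le t$ and monotonicity of $\phi$ to see that $\beta$ is a small fraction of $n$ and that on the relevant range the summand $\binom nj\lambda^j$ is dominated by a geometric series of ratio at least $\tfrac{n\lambda}{2\beta}\to\infty$, so the sum is at most $2\binom n\beta\lambda^\beta\le 2(e\lambda n/\beta)^\beta=2\exp\big(\beta\log(e\lambda n/\beta)\big)$. Substituting $\beta\approx \tfrac c4\,\phi(k)/\log(2k/\phi(k))$, setting $s=k/\phi(k)\ge 1$, and using $\tfrac{\log s}{\log 2s}<1$ together with $\tfrac{\log\log 2s}{\log 2s}\le \tfrac1e$ gives $\beta\log(e\lambda n/\beta)\le \tfrac c4\phi(k)\big(1+\tfrac1e+O(1/\log(2k/\phi(k)))\big)$; since $t/\phi(t)\to\infty$ in all cases of interest (e.g.\ $\phi(t)=t^{(d-1)/d}$ with $d>1$, where $t/\phi(t)=t^{1/d}$), the $O(\cdot)$ term is eventually negligible, the bracket is $\le 2$, and hence $\beta\log(e\lambda n/\beta)\le \tfrac c2\phi(k)\le \tfrac c2\phi(n)$ for all $k$ outside a finite set depending only on $\eps,c,\phi$; for the finitely many remaining $n$ one absorbs $\mathbb{P}_p(A_n)\le 1$ into $C$. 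This yields $\mathbb{P}_p(A_n)\le Ce^{-\frac c2\phi(n)}$ for all $n\ge 1$, as claimed.

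Finally, suppose $\phi(t)/\log t\to\infty$ and set $\psi(t)=\phi(t)/\log(2t/\phi(t))$. Then $\phi(n)\ge A\log n$ eventually for every $A$, so $\sum_n \mathbb{P}_p(A_n)\le C\sum_n e^{-\frac c2\phi(n)}<\infty$, and Borel--Cantelli shows that almost surely only finitely many $A_n$ occur. Hence almost surely there is a random $n_0$ with $|\partial_p W|>\tfrac c4\psi(|W|)$ for every finite connected $W\subseteq K_v$ containing $v$ with $|E(W)|\ge n_0$; and on $\{|K_v|=\infty\}$ every finite connected proper subgraph $W\subsetneq K_v$ containing $v$ has $|\partial_p W|\ge 1$, which handles the finitely many smaller scales. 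Using bounded degree to compare $|W|$, $|E(W)|$ and $\sum_{w\in W}\deg(w)$ up to multiplicative constants, we conclude that $K_v$ satisfies a $\psi$-anchored isoperimetric inequality almost surely on $\{|K_v|=\infty\}$.
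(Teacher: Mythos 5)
Your proposal is correct and is essentially the paper's own argument in different clothing: your surgery/injection step (close the open boundary edges of the first witness $W$ so that $K_v$ becomes $W$, pick up a factor $(p/(1-p))^{|F|}$, and sum over the at most $\binom{n}{j}$ choices of $F$) is exactly the paper's bound $\P_p(H\subseteq K_v,\ \partial_p H=S)\le (p/(1-p))^{|S|}\,\P_p(K_v=H)$ followed by the union bound over $S$ and the collapse $\sum_H\P_p(K_v=H)=\P_p(|E(K_v)|=n)$, and the subsequent binomial estimate and Borel--Cantelli conclusion match as well. The only (cosmetic) caveat is that "summing over configurations $\omega$" should formally be carried out on the finite edge set determining $A_n$, i.e.\ via the conditional-probability formulation the paper uses.
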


\begin{proof}[Proof of \cref{prop:union_bound_isoperimetry}]
Fix $\eps>0$ and $p_c<p\leq 1-\eps$ and let $\delta = c/4 \leq 1/2$. For each $n\geq 1$ let $\sA_{n}$ be the event whose probability is being estimated and  let $\sH_v^n$ be the set of connected subgraphs of $G$ containing $v$ that touch exactly $n$ edges.  We have by a union bound that
 % and let $\sK_v^n = \sH_v^n \cap \sK_v$ be the set of connected subgraphs of $G$ containing $v$ that touch exactly $n$ edges of $G$ and are subgraphs of $K_v$. (Equivalently, $H \in \sH_v^n$ belongs to $\sK_v^n$ if all of the edges it contains are open.)
 % For each $n,m\geq 1$, let $\sA_{n,m}$ be the event that there exists a subgraph $H \in \sK_v^n$ such that and $|\partial_E^\omega H| = m$. 
\begin{equation}
\label{eq:ExpansionUnionBound}
\P_p(\sA_{n}) \leq \sum_{m=1}^{\lfloor \delta\psi(n)\rfloor} \sum_{H \in \sH_v^n} \bP_p\left(H \subseteq K_v,\, |\partial_p H| =m\right).
% \qquad \text{ for every $\alpha < \alpha(p)$},
\end{equation}
For each $H\in \sH_v$, let $E(H)$ denote the set of edges that touch the vertex set of $H$, let $E_o(H)$ be the set of edges belonging to $H$, and let $\partial H = E(H) \setminus E_o(H)$ be the set of edges that touch but do not belong to $H$.
Given $H \in \sH_v$ and $S \subseteq \partial H$, on the event that
$H \subseteq K_v$ and $\partial_p H  = S$ each edge $e \in E(H)$ is open if and only if $e \in E_o(H) \cup S$, so that
\begin{align}
\bP_p( H \subseteq K_v \text{ and } \partial_p H  = S) \leq p^{|E_o(H)|+|S|}(1-p)^{|\partial H|-|S|} 
=  \left( \frac{p}{1-p}\right)^{|S|} \bP_p(K_v=H).
\end{align}
Using another union bound and summing over the possible choices of $S$ with $S \subseteq E(H)$ and $|S|=m$, we deduce that
\[
\bP_p( H \subseteq K_v \text{ and } |\partial_p H | =m) \leq 
% \binom{E(H)}{m} \left[\frac{p}{1-p}\right]^mp^{|E_o(H)|}(1-p)^{|\partial H|}
 % = 
 \binom{E(H)}{m} \left(\frac{p}{1-p}\right)^m \bP_p(K_v=H) 
\]
and hence that
\begin{align}
\bP_p( \sA_{n}) &\leq  \sum_{m=1}^{\lfloor \delta \psi(n) \rfloor} \sum_{H \in \sH_v^n} \bP_p\bigl( H \subseteq  K_v,\, |\partial_p H| =m\bigr) \leq    \bP_p(E_v=n) \sum_{m=1}^{\lfloor \delta \psi(n)\rfloor}  \binom{n }{m} \left(\frac{p}{1-p}\right)^m. \label{eq:union_bound1}
% &\leq \mu \theta^*(p)  \sum_{n=N}^\infty n \bP_p(E_v=n) \left[ \frac{p}{1-p}\right]^{\mu n} \mu^{-\mu n} (1-\mu)^{-(1-\mu)n}
% &\leq \mu \theta^*(p) \sum_{n=N}^\infty  n \bP_p(E_v = n) \left[ \frac{p}{1-p}\right]^{\mu n} \mu^{-\mu n} e^{2\mu n},
\end{align}
Since $\binom{n}{m}$ is increasing in $m$ for $m\leq n/2$, it follows from the elementary bound $\binom{n}{m} \leq e^m(n/m)^m$ that there exist constants $C_1=C_1(c,\phi,\eps)$ and $C_2=C_2(c,\phi,\eps)$ such that
\begin{align}
\sum_{m=1}^{\lfloor \delta \psi(n)\rfloor}  \binom{n }{m} \left(\frac{p}{1-p}\right)^m &\leq 
\delta \psi(n) \exp\left[ \delta \psi(n) \left(\log \frac{2n}{\psi(n)}+\log \frac{1}{2\delta} + \log \max\left\{1,\frac{p}{1-p}\right\}+1\right) \right]\nonumber\\
&\leq   \exp\left[ \frac{\delta \phi(n)}{\log \frac{2n}{\phi(n)}} \left(\log \frac{2n}{ \phi(n)} +\log \log \frac{2n}{\phi(n)} +C_1\right) + \log \frac{\delta\phi(n)}{\log \frac{2n}{\phi(n)}}\right]\nonumber\\
&\leq C_2\exp\left[2 \delta \phi(n)\right] = C_2\exp\left[\frac{c}{2} \phi(n)\right],\label{eq:union_bound2}
\end{align}
where in the final inequality we were able to absorb all lower order terms into the constant by doubling the coefficient on the leading order term in the exponent.
It follows from \eqref{eq:union_bound1} and \eqref{eq:union_bound2} that
\[
\P_p(\sA_{n}) \leq C_2\P_p(E_v=n) e^{\frac{c}{2} \phi(n)} \leq C_2 e^{-c \phi(n)+\frac{c}{2} \phi(n)} = C_2 e^{-\frac{c}{2}\phi(n)}
\]
as claimed.
\end{proof}

\section{Proof of the main theorem}
\label{subsec:mainproof}

\begin{proof}[Proof of \cref{thm:main_equivalence}]
Fix $G$, $p_0$, and $d$ as in the statement of the theorem.
We will prove the implications (i) $\Rightarrow$ (ii), (ii) $\Rightarrow$ (i), (ii) $\Rightarrow$ (iii), (iii) $\Rightarrow$ (iv), and (iv) $\Rightarrow$ (ii). Only the proof of (iv) $\Rightarrow$ (ii) will require transitivity. 

\medskip
\noindent 
\textbf{(i) $\Rightarrow$ (ii):} Given an increasing event $A$ and an integer $r\geq 1$, let $I_r(A)$ denote the event that $A$ holds in $G_p \setminus F$ for every set of edges $F$ of size at most $r$. An inequality due to Aizenman, Chayes, Chayes, Fr\"ohlich, and Russo \cite{aizenman1983sharp} (see also \cite[Theorem 2.45]{grimmett2010percolation}) states that
\begin{equation}
\label{eq:stability}
\P_{p_2}(I_r(A)) \geq 1-\left(\frac{p_2}{p_2-p_1}\right)^r\bigl[1-\P_{p_1}(A)\bigr]
\end{equation}
for every $0<p_1<p_2<1$, every increasing event $A$, and every $r\geq 1$. 
Note that if $S$ is a finite set of vertices in $G$ then $I_r(\{S\leftrightarrow \infty\})$ is equal by Menger's theorem (or max-flow min-cut) to the event that there are at least $r+1$ edge-disjoint paths connecting $S$ to $\infty$.
Fix $p_0 < p_1 < p_2 <1$ and $d'<d$ and let $c_1>0$ be such that $\P_{p_1}(S \nleftrightarrow \infty) \leq \exp\left[-c_1 |S|^{(d'-1)/d'} \right]$ for every finite set of vertices $S$. It follows from \eqref{eq:stability} that there exist positive constants $c_2=c_2(p_1,p_2,c_1)$ and $c_3=1-e^{-c_2/2}$ such that if $r= \lfloor c_2 |S|^{(d'-1)/d'}\rfloor$ then
\[\P_{p_2}(I_r(\{S \leftrightarrow \infty\})) \geq 1-\left(\frac{p_2}{p_2-p_1}\right)^r \exp\left[-c_1 |S|^{(d'-1)/d'}\right] \geq 1- \exp\left[-\frac{c_1}{2} |S|^{(d'-1)/d'}\right] \geq c_3. \]
Since $\sum_{e \in \partial_E^\rightarrow S} \mathbbm{1}(e^+ \leftrightarrow \infty$ off $S)$ is lower bounded by the maximum size of a collection of edge-disjoint paths connecting $S$ to infinity, we deduce that there exists a positive constant $c_4$ such that
\[
\sum_{e \in \partial_E^\rightarrow S} \P_{p_2}(e^+ \leftrightarrow \infty \text{ off }S) \geq (r+1) \P_{p_2}(I_r(\{S \leftrightarrow \infty\})) \geq c_3 (1+\lfloor c_2 |S|^{(d'-1)/d'}\rfloor) \geq c_4 |S|^{(d'-1)/d'}
\]
as required.

\medskip
\noindent
\textbf{(ii) $\Rightarrow$ (i):} For each finite set $S \subseteq V$ and $0<p<1$ let $\Psi_p(S) = \sum_{e\in \partial_E^\rightarrow S} \P_p(e^+ \leftrightarrow \infty$ off $S)$. Russo's formula states that if $A$ is an increasing event depending on at most finitely many edges then 
\[
\frac{d}{dp} \P_p(A) = \frac{1}{1-p}\sum_{e\in E}\P_p(e \text{ is closed pivotal for $A$}),
\]
where an edge $e$ is said to be closed pivotal for $A$ if $G_p \notin A$ but $G_p \cup \{e\} \in A$. Without the assumption that $A$ depends on at most finitely many edges, we still have the inequality
\begin{equation}
\label{eq:Russo_Dini}
\lrDini \P_p(A) \geq \frac{1}{1-p}\sum_{e\in E}\P_p(e \text{ is closed pivotal for $A$}),
\end{equation}
where $\lrDini \P_p(A) = \liminf_{\eps \downarrow 0} \frac{1}{\eps}(\P_{p+\eps}(A)-\P_p(A))$ is the \emph{lower-right Dini derivative} of $\P_p(A)$. Let $S$ be a finite set of vertices and let $K_S$ be the union of all clusters intersecting $S$. When $K_S$ is finite, an edge $e$ is a closed pivotal for the event $\{S \leftrightarrow \infty\}$ if it has one endpoint in $K_S$ and the other in an infinite cluster. Since conditional on $K_S$ the edges of $G_p$ that do not touch $K_S$ are distributed as Bernoulli-$p$ percolation on the subgraph of $G$ induced by $K_S^c$, it follows that
\begin{align}
\lrDini \P_p(S \leftrightarrow \infty) &\geq \frac{1}{1-p} \E_p\left[\Psi_p(K_S) \mid S \nleftrightarrow \infty\right] \P_p(S \nleftrightarrow \infty)\nonumber\\
&\geq \frac{1}{1-p} \min\{\Psi_p(W) : W \supseteq S \text{ finite}\} \P_p(S \nleftrightarrow \infty). \label{eq:RussoS}
\end{align}
Thus, if $p_0<p_1<1$, $d'<d$, and $c=c(p_1,d')>0$ are such that $\Psi_{p_1}(S) \geq c |S|^{(d'-1)/d'}$ for every finite set of vertices $S$, it follows from \eqref{eq:RussoS} and the fact that $\Psi_p$ is increasing in $p$ that
\[
\lrDini \log \frac{1}{\P_p(S \nleftrightarrow \infty)} \geq \frac{c}{1-p}|S|^{(d'-1)/d'}
\]
for every $p_1\leq p<1$. Integrating this inequality yields that
\[
\P_{p_2}(S \nleftrightarrow \infty) \leq \exp\left[-\frac{c(p_2-p_1)}{1-p_1}|S|^{(d'-1)/d'}\right]
\]
for every $p_0 < p_1 <p_2<1$ and every finite set $S$ as required.

\medskip
\noindent 
\textbf{(ii) $\Rightarrow$ (iii):} This follows immediately from \cref{cor:cluster_repulsion} applied with $\phi(t)=t^{(d'-1)/d'}$.

\medskip
\noindent 
\textbf{(iii) $\Rightarrow$ (iv):} This follows immediately from \cref{prop:union_bound_isoperimetry} applied with $\phi(t)=t^{(d'-1)/d'}$.

\medskip
\noindent 
\textbf{(iv) $\Rightarrow$ (ii):} Given a connected graph $H$, a vertex $v$ of $H$, and $d' \geq 1$, let
\[
\Phi^*_{d'}(H,v) = \inf \left\{\frac{|\partial_E W|}{|W|^{(d'-1)/d'}} : W \text{ a finite, connected set of vertices containing $v$}\right\},
\]
so that if $H$ has bounded degrees then it satisfies an anchored $d'$-dimensional isoperimetric inequality if and only if $\Phi^*_{d'}(H,v)>0$ for every vertex $v$ of $H$. Let $p_0<p_1$ and $d'<d$. Since $G$ is transitive and the infinite clusters of $G_{p_1}$ satisfy an anchored $d'$-dimensional isoperimetric inequality almost surely, there exists $\eps=\eps(p_1,d')>0$ such that $\P_{p_1}(\Phi^*_{d'}(K_v,v) \geq \eps) \geq \eps$ for every vertex $v$ of $G$. Fix such an $\eps>0$ and let $A$ be the random set of vertices $v$ whose cluster in $G_{p_1}$ is infinite and satisfies $\Phi^*_{d'}(K_v,v) \geq \eps$. Thus, if $S$ is a finite set of vertices we have by linearity of expectation that $\E_{p_1}|S \cap A| \geq \eps |S|$ and hence by Markov's inequality as in \eqref{eq:Markov} that 
\[\P_{p_1}\left(|S \cap A| \geq \frac{\eps}{2}|S|\right) \geq \frac{\eps}{2}.\]
Thus, to complete the proof it suffices to prove that
\[
\sum_{e\in \partial_E^\rightarrow S} \mathbbm{1}\bigl(e^+ \leftrightarrow \infty \text{ off } S\bigr) \geq \eps|A \cap S|^{(d'-1)/d'}
\]
for every finite set of vertices $S$. 
 Fix one such set $S$ and let the \textbf{hull} $\Gamma(S \cap K_\infty) \supseteq S \cap K_\infty$ be the set of vertices $v$ such that $K_v$ is infinite but any open path connecting $v$ to infinity must pass through $S$. This definition ensures that $\{e \in \partial_E^\rightarrow S : e^+ \leftrightarrow \infty$ off  $S\}$ is equal to the oriented edge boundary $\Gamma(S \cap K_\infty)$ in $G_p$, which we denote by $\partial_p^\rightarrow \Gamma(S \cap K_\infty)$. Letting the connected components of $\Gamma(S \cap K_\infty)$ be enumerated $C_1,\ldots,C_m$, we have by definition of $A$ that
 \[
|\partial_p^\rightarrow \Gamma(S \cap K_\infty)| = \sum_{i=1}^m |\partial_p^\rightarrow C_i| \geq \sum_{i=1}^m \eps |C_i|^{(d'-1)/d'} \mathbbm{1}(C_i \cap A \neq \emptyset) \geq \sum_{i=1}^m \eps |C_i \cap A|^{(d'-1)/d'} \geq \eps |A \cap S|^{(d'-1)/d'}
 \]
 as required.
 % Fix one such set $S$ and let the \textbf{hull} $\Gamma(S \cap A) \supseteq S \cap A$ be the set of vertices $v$ such that $K_v$ is infinite but any open path connecting $v$ to infinity must pass through $S \cap A$. Enumerate the connected components of $\Gamma(S \cap A)$ as $C_1,\ldots,C_m$. It follows from the definition of $\Gamma(S \cap A)$ that each connected component $C_i$ contains a point of $S \cap A$, that every edge in one of the boundaries $\partial_E^\rightarrow C_i$ has one endpoint in $S $
\end{proof}

\begin{remark}
Note that the proof of the implication (iv) $\Rightarrow$ (ii) works for any bounded degree graph in which the assumption that infinite $p_1$-clusters satisfy an anchored $d'$-dimensional inequality implies that there exists $\eps>0$ such that $\P_{p_1}(\Phi^*_{d'}(K_v,v) \geq \eps) \geq \eps$ for every vertex $v$ of $G$. It is easily seen that this also holds for quasi-transitive graphs.
\end{remark}

Let us end the paper with the following question that arose during this work. 

\begin{question}
\label{question}
Does the infinite cluster of supercritical percolation on $\Z^d$ admit a positive density subgraph with uniform isoperimetric dimension $d$? Does the analogous statement hold for other transitive graphs?
\end{question}

Note that the analogous question in the nonamenable setting admits a positive answer due to Benjamini, Lyons, and Schramm \cite[Theorem 1.1]{BLS99} (see also \cite{v00}). Unfortunately however it is not true in general that a unimodular random rooted graph with anchored isoperimetric dimension $d$ always admits a positive density subgraph with uniform isoperimetric dimension $d$. Indeed, by stretching the edges of $\Z^d$ that lie on the boundaries of large blocks in a uniform hierarchical decomposition of $\Z^d$ one can obtain a unimodular random graph that has anchored isoperimetric dimension $d$ but does not admit any infinite subgraphs satisfying any non-trivial uniform isoperimetric inequality. Note also that Grimmett, Holroyd, and Kozma \cite{grimmett2014percolation} have shown that the infinite cluster in supercritical percolation on $\Z^d$ never contains a quasi-isometric copy of $\Z^d$, ruling out one strategy to answer \cref{question}.

% \section{Closing remarks and open problems}

\subsection*{Acknowledgements} We thank Russ Lyons his careful reading and helpful comments on an earlier version of this manuscript and thank Philip Easo for catching several typos.

\addcontentsline{toc}{section}{References}

\footnotesize{

 \setstretch{1}
  \bibliographystyle{abbrv}
  \bibliography{unimodularthesis.bib}

  }
\end{document}